\renewcommand\subsection{\@startsection{subsection}{2}{0mm}
{-10.5dd plus-8pt minus-4pt}{10.5dd}
{\normalsize\upshape}}
\renewcommand{\index}[1]{ }
\theoremstyle{plain}
\newtheorem{theorem}{Theorem}[section]
\newtheorem{lemma}[theorem]{Lemma}
\newtheorem{proposition}[theorem]{Proposition}
\newtheorem{corollary}[theorem]{Corollary}
\theoremstyle{definition}
\newtheorem{remark}[theorem]{Remark}
\newtheorem{definition}[theorem]{Definition}
\newtheorem{example}[theorem]{Example}
\newtheorem{question}[theorem]{Question}
\newtheorem{claim}{Claim}
\newcommand{\figer}[4]{
\begin{figure}[h]
\begin{center}
\includegraphics[width=#2\textwidth]{#1.eps}
\end{center}
\caption{#4 }
\label{#3}
\end{figure}}
\numberwithin{figure}{section}
\newcommand{\ZZ}{\mathbb{Z}}
\newcommand{\interior}{\operatorname{int}}
\newcommand{\Fix}{\operatorname{Fix}}
\newcommand{\cl}{\operatorname{cl}}
\newcommand{\fr}{\operatorname{fr}}
\newcommand{\Isom}{\operatorname{Isom}}
\newcommand{\lk}{\operatorname{lk}}
\newcommand{\Sym}{\operatorname{Sym}}
\newcommand{\Diff}{\operatorname{Diff}}
\newcommand{\Area}{\operatorname{Area}}
\newcommand{\cob}{\mathcal{C}}
\newcommand{\FF}{\mathcal{F}}
\begin{document}

\title[Invariant Seifert surfaces]
{Invariant Seifert surfaces for strongly invertible knots}

\author{Mikami Hirasawa}
\address{Department of Mathematics\\
Nagoya Institute of Technology\\ 
Showa-ku, Nagoya city, Aichi, 466-8555, Japan}
\email{hirasawa.mikami@nitech.ac.jp}

\author{Ryota Hiura}
\address{Inuyama-Minami High School\\
Hasuike 2-21, Inuyama City, Aichi, 484-0835, Japan}
\email{ryhiura@gmail.com}

\author{Makoto Sakuma}
\address{Osaka Central Advanced Mathematical Institute\\
Osaka Metropolitan University\\
3-3-138, Sugimoto, Sumiyoshi, Osaka City
558-8585, Japan}
\email{sakuma@hiroshima-u.ac.jp}

\makeatletter
\@namedef{subjclassname@2020}{%
\textup{2020} Mathematics Subject Classification}
\makeatother

\keywords{strongly invertible knot, invariant Seifert surface, equivariant genus, Kakimizu complex\\   
{\bf This article is to appear in the book,
Essays in geometry, dedicated to Norbert \mbox{A'Campo} (ed. A. Papadopoulos), European Mathematical Society Press, Berlin,  2023.}}
\subjclass[2020]{Primary 57K10; secondary 57M60}

\begin{abstract}
Abstract: We study invariant Seifert surfaces for strongly invertible knots, and prove
that the gap between the equivariant genus 
(the minimum of the genera of invariant Seifert surfaces)
of a strongly invertible knot and the (usual) genus of the underlying knot 
can be arbitrarily large.  
This forms a sharp contrast with Edmonds' theorem that
every periodic knot admits an invariant minimal genus Seifert surface. 
We also prove variants of Edmonds' theorem, 
which are useful in studying
invariant Seifert surfaces for strongly invertible knots.

\end{abstract}

\maketitle

\vspace*{-5mm}
\begin{center}
{\it Dedicated to Professor Norbert A'Campo on his 80th birthday}
\end{center}

\section{Introduction} \label{sec:intro}
A smooth knot $K$ in $S^3$ is said to be 
({\it cyclically}) {\it periodic with period $n$}\index{knot!periodic knot} 
if there is a periodic diffeomorphism $f$ of $S^3$ 
of period $n$
which leaves $K$ invariant and fixes a simple loop in the knot complement $S^3\setminus K$.
Since the seminal work by Trotter \cite{Trotter1961} and Murasugi \cite{Murasugi1971},
periodic knots have been studied extensively.
In particular, Edmonds and Livingston \cite{Edmonds-Livingston}
proved that every periodic knot admits an invariant incompressible Seifert surface,
and this was enhanced by
Edmonds \cite{Edmonds} to the existence of an invariant minimal genus Seifert surface.
(He applied this result to prove Fox's conjecture 
that a given nontrivial knot has only finitely many periods.)

It is natural to ask if the same result holds for strongly invertible knots.
Recall that a smooth knot $K$ in $S^3$ is said to be 
{\it strongly invertible}
if there is a smooth involution $h$ of $S^3$
which leaves $K$ invariant and fixes a simple loop intersecting $K$ in two points.
The involution $h$ is called a 
{\it strong inversion}\index{strong inversion} 
of $K$.
As in \cite{Sakuma1986},
we use the term 
{\it strongly invertible knot}\index{knot!strongly invertible knot} 
to mean a pair $(K,h)$ of a knot $K$ and a strong inversion $h$ of $K$,
and regard two strongly invertible knots $(K,h)$ and $(K',h')$ 
to be {\it equivalent}
if there is an orientation-preserving diffeomorphism $\varphi$
of $S^3$ mapping $K$ to $K'$ such that $h'=\varphi h\varphi^{-1}$.

Note that if $S$ is an $h$-invariant Seifert surface for a strongly invertible knot $(K,h)$
then $\Fix(h)\cap S$ is equal to one of the two subarcs of 
$\Fix(h)\cong S^1$
bounded by  
$\Fix(h)\cap K\cong S^0$.
So the problem of whether $K$ admits an invariant minimal genus Seifert surface
depends on the choice of the subarc of $\Fix(h)$, in addition to the choice of 
the strong inversion $h$.
By a {\it marked strongly invertible knot}\index{knot!marked strongly invertible knot}, 
we mean a triple $(K,h,\delta)$ where $(K,h)$ is a strongly invertible knot
and $\delta$ is a subarc of $\Fix(h)$ bounded by $\Fix(h)\cap K$.
Two marked strongly invertible knots $(K,h,\delta)$ and $(K',h',\delta')$ are regarded to be
{\it equivalent} if 
there is an orientation-preserving diffeomorphism $\varphi$
of $S^3$ mapping $K$ to $K'$ such that 
$h'=\varphi h\varphi^{-1}$ and 
$\delta'=\varphi(\delta)$.

\begin{definition}
    \label{def:equivariant-genus}
    {\rm
    By an {\it invariant Seifert surface}\index{surface!invariant Seifert surface}
    \index{Seifert surface!invariant Seifert surface} 
     (or a {\it Seifert surface} in brief)
    {\it for a marked strongly invertible knot $(K,h,\delta)$}, 
    we mean a Seifert surface $S$ for $K$ 
    such that $h(S)=S$ and $\Fix(h)\cap S=\delta$.
    The {\it equivariant genus}\index{genus!equivariant genus} 
     (or the {\it genus} in brief) $g(K,h,\delta)$ of $(K,h,\delta)$ 
    is defined to be the minimum
    of the genera of Seifert surfaces for $(K,h,\delta)$.
    A Seifert surface for $(K,h,\delta)$ is said to be of 
    {\it minimal genus}\index{genus!minimal genus} 
    if its genus is equal to $g(K,h,\delta)$.
    }
    \end{definition}

Every marked strongly invertible knot admits an invariant Seifert surface 
(Proposition \ref{prop:existence-invariant-Seifert}), and so 
its equivariant genus is well-defined.
However, in general, 
the equivariant genus is bigger than the (usual) genus.
In fact, the following theorem is proved in the second author's 
master thesis \cite{Hiura} supervised by the third author with support by the first author.

\begin{theorem}
\label{thm:main}
For any integer $n\ge 0$, 
there exits a marked strongly invertible knot $(K,h,\delta)$
such that $g(K,h,\delta)-g(K)=n$. 
\end{theorem}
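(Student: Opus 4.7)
Plan. The strategy is to produce a single base example with positive gap and then amplify it via iterated equivariant connected sum.

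The first step is to produce a marked strongly invertible knot $(J, h_J, \delta_J)$ with $g(J, h_J, \delta_J) - g(J) = 1$. I would start from a low-crossing strongly invertible knot whose Kakimizu complex is well understood, and combine two ingredients: an explicit invariant Seifert surface of genus $g(J) + 1$ (for example obtained by equivariantly plumbing on a Hopf band along the fixed arc), which gives the upper bound; and an obstruction showing that no minimal genus Seifert surface for $J$ is isotopic to an invariant one. The Kakimizu complex is the natural tool for the obstruction: $h_J$ acts on the simplicial complex $MS(J)$ of isotopy classes of minimal genus Seifert surfaces, and if this action has no fixed vertex, then no invariant minimal genus Seifert surface exists, so $g(J, h_J, \delta_J) > g(J)$, giving a gap of exactly $1$.

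The second step is to define $(K_n, h_n, \delta_n)$ as the $n$-fold equivariant connected sum of copies of $(J, h_J, \delta_J)$, where the summing is performed along the fixed arcs $\delta_J$ so that the strong inversions glue coherently. Banding together $n$ invariant minimal genus Seifert surfaces along the connecting arcs yields $g(K_n, h_n, \delta_n) \le n \cdot g(J, h_J, \delta_J)$, while Schubert's additivity gives $g(K_n) = n \cdot g(J)$. The matching lower bound $g(K_n, h_n, \delta_n) \ge n \cdot g(J, h_J, \delta_J)$ is the content of an equivariant version of the additivity theorem: given any invariant Seifert surface $S$ for $(K_n, h_n, \delta_n)$, put $S$ in equivariant general position with respect to the invariant decomposing $2$-spheres of the equivariant connected sum and use equivariant innermost-disk surgeries to cut $S$ along those spheres into invariant Seifert surfaces for the $n$ factors; summing genera yields the bound.

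The main obstacle will be this equivariant additivity, the equivariant analog of Haken's classical theorem that Seifert genus is additive under connected sum. The standard Haken argument proceeds by general position and innermost-disk surgery on the intersection curves between an essential decomposing $2$-sphere and a minimal genus Seifert surface, and the equivariant upgrade demands that these operations respect the $h_n$-action, handling both the curves fixed set-wise by $h_n$ and the pairs of $h_n$-related curves in tandem. The variants of Edmonds' theorem promised in the abstract should be exactly the equivariant cut-and-paste tools that make this work. Once equivariant additivity is established, the gap equals $n \cdot (g(J, h_J, \delta_J) - g(J)) = n$, and the case $n = 0$ is trivial (take the unknot with its standard strong inversion).
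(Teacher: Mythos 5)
Your route is genuinely different from the paper's, but it has an unproven keystone. The paper does not amplify a base example by connected sum: it takes the explicit $2$-bridge family $K_n$ (plat closure of $(\sigma_2^2\sigma_1^4)^n$, with $g(K_n)=n$), exhibits an invariant Seifert surface of genus $2n$, and proves the matching lower bound in the quotient via Corollary \ref{cor:band-number}, $g(K,h,\delta)\ge b(\check K)$, computing $b(\check K)=2n$ from Hatcher--Thurston together with Hirasawa--Teragaito (Example \ref{example:main-example}); your base case is essentially the paper's Example \ref{example:8-3} ($8_3$ has gap exactly $1$, with the Kakimizu obstruction you describe), but in the paper that example is only illustrative. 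Your amplification instead needs an \emph{equivariant additivity} theorem, $g(K_1\# K_2,h,\delta)=g(K_1,h_1,\delta_1)+g(K_2,h_2,\delta_2)$, which the paper nowhere proves, and Theorems \ref{thm:disjoint-Seifert} and \ref{thm:disjoint-Seifert2} are not the tools for it: they produce minimal genus surfaces disjoint from $F\cup h(F)$ (feeding into the sutured-manifold arguments of the sequel), not cut-and-paste control along decomposing spheres.

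Here is where your equivariant Haken argument concretely breaks. The decomposing sphere $\Sigma$ can be taken $h$-invariant, with $h|_\Sigma$ a rotation fixing the two axis points $q\in\delta$ and $q'\in\delta^c$. Curves of $S\cap\Sigma$ that are swapped in pairs are fine, but an $h$-invariant circle necessarily separates $q$ from $q'$ on $\Sigma$, and if it is innermost its disk $\Delta$ is $h$-invariant and must contain an axis point by Smith theory; it cannot contain $q$ (since $q\in\delta\subset S$ would violate innermostness), so $q'\in\interior\Delta$. Equivariant compression along $\Delta$ replaces an annulus in $S$ by two parallel copies of $\Delta$, each of which crosses the axis near $q'$: the resulting surface meets $\Fix(h)$ in $\delta$ plus two points of $\interior\delta^c$, so it is no longer a Seifert surface for the marked knot in the sense of Definition \ref{def:equivariant-genus}. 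In the quotient picture of Proposition \ref{prop:lifting} such a loop has odd linking with $O$ and is orientation-reversing in $\check S$, and the problematic move is exactly the crosscap-trading that makes crosscap number fail to be additive by one (Murakami--Yasuhara); so a priori your additivity could be off by one per summand. That defect is fatal to your scheme: with base gap $1$, a lower bound of the form $\sum_i g(J,h_J,\delta_J)-(n-1)$ only yields gap $\ge 1$, not $n$. Unless you rule out this configuration or prove exact additivity by genuinely new means (plausibly via the quotient-surface formulation and Condition (C), but this is real work, not a routine equivariant upgrade of Haken), the amplification step does not close; the paper's explicit computation of both $g(K_n)$ and $g(K_n,h,\delta)$ sidesteps the issue entirely.
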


This theorem follows from a formula 
of the equivariant genera for
certain marked strongly invertible knots 
that arise from $2$-bridge knots.
See Examples \ref{example:8-3} and \ref{example:main-example} for 
special simple cases.
However, there remained various marked strongly invertible $2$-bridge knots 
whose equivariant genera were undetermined.

\medskip

This paper and its sequel \cite{Hirasawa-Hiura-Sakuma2} 
are motivated by the desire 
to determine the equivariant genera of 
all marked strongly invertible $2$-bridge knots.

 In this paper, we prove the following two variants of Edmonds' theorem on periodic knots,
 which are useful in 
 studying invariant Seifert surfaces for general strongly invertible knots:
Theorem \ref{thm:disjoint-Seifert2} is 
used in \cite{Hirasawa-Hiura-Sakuma2} to give a unified
 determination of the equivariant genera of 
all marked strongly invertible $2$-bridge knots.
  
\begin{theorem}
\label{thm:disjoint-Seifert}
Let $(K,h)$ be a strongly invertible knot.
Then there is a minimal genus Seifert surface $F$ for $K$ 
such that $F$ and $h(F)$ have disjoint interiors.
\end{theorem}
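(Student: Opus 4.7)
The plan is to adapt the Kakimizu--Scharlemann--Thompson double-curve-sum argument for minimal-genus Seifert surfaces to an $h$-equivariant setting. Because the strong inversion $h$ reverses the orientation of $K$, if $F$ is any minimal-genus Seifert surface then $\tilde F:=-h(F)$ (namely, $h(F)$ with orientation reversed) is again a minimal-genus Seifert surface for $K$; as unoriented subsets of $S^3$ one has $\tilde F=h(F)$, so $F\cap h(F)=F\cap\tilde F$. First I would apply a standard collar isotopy near $K$ to make the Seifert arcs of $F$ and $\tilde F$ disjoint inside a tubular neighborhood $K\times D^2$, so that $F\cap\tilde F=K\sqcup C$ with $C$ a disjoint union of interior circles; this $C$ is automatically $h$-invariant because $h$ exchanges $F$ and $\tilde F$ set-theoretically.

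I would then choose $F$ so that $|C|$ is minimal, and argue by contradiction from $|C|>0$. Incompressibility of minimal-genus Seifert surfaces in the irreducible knot exterior forces, via an innermost-disk argument, every circle of $C$ to be essential on both $F$ and $\tilde F$ (else one could isotope to reduce $|C|$). Given this, I would perform the oriented double curve sum of $F$ and $\tilde F$ along $C$; by the standard Scharlemann--Thompson type decomposition, the output splits into two oriented Seifert surfaces $F_1',F_2'$ for $K$ with $\chi(F_1')+\chi(F_2')=2\chi(F)$. Since each has genus at least $g(K)$, both are forced to have genus exactly $g(K)$, and a small transverse perturbation resolving the tangential intersection loci along $C$ leaves strictly fewer interior intersection circles between $F_1'$ and $F_2'$ than in $C$.

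The hard part --- and the novelty compared with the non-equivariant argument --- is arranging the double curve sum to be $h$-equivariant, so that $F_2'$ is actually the set $h(F_1')$ and not merely another minimal-genus Seifert surface. At every non-invariant $h$-orbit $\{c,h(c)\}\subset C$ the resolution is forced to be equivariant once one performs it symmetrically at $c$ and $h(c)$. At an $h$-invariant circle $c\in C$, however, one must analyze the local action: a tubular neighborhood $N(c)\cong c\times D^2$ has $h$-action whose restriction to the $D^2$-fiber over a fixed point of $h|_c$ exchanges the two arcs $F\cap D^2$ and $\tilde F\cap D^2$, locally modeled by $(x,y)\mapsto(y,x)$. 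Both of the two possible oriented resolutions $\{xy=\pm\epsilon\}$ of the crossing $\{xy=0\}$ are invariant under this involution, so one can indeed choose the resolution along $c$ equivariantly. Assembling these local choices yields a globally $h$-equivariant double curve sum, whence $h(F_1')=F_2'$ as sets. Then $F_1'$ is a minimal-genus Seifert surface with $F_1'\cap h(F_1')$ containing strictly fewer interior circles than $F\cap h(F)$, contradicting minimality; hence $|C|=0$ and $F$ has the required property.
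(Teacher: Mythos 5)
Your strategy---an $h$-equivariant version of the Scharlemann--Thompson/Kakimizu double-curve-sum argument---is viable in outline and genuinely different from the paper's proof, but as written it has a real gap, and it sits exactly at the step you pass over most quickly: the claim that minimality of $|C|$ forces every circle of $C=(F\cap h(F))\setminus K$ to be essential in both surfaces, ``else one could isotope to reduce $|C|$.'' In the equivariant setting you cannot isotope $F$ in isolation: any isotopy of $F$ drags $h(F)$ along with it, so the standard innermost-disk move across the ball bounded by $\Delta\cup\Delta'$ need not reduce $|C|$; the $h$-image of the isotopy can create new intersection circles elsewhere. The repair is an equivariant disk swap, and it needs the following analysis. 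For $\alpha$ innermost in $h(F)$, bounding $\Delta\subset h(F)$ with $\operatorname{int}\Delta\cap F=\emptyset$, incompressibility of $F$ forces $\alpha$ to be inessential in $F$ as well, and the key inclusion is $\Delta\cap h(\Delta)\subset \alpha\cap h(\alpha)$ (any point of $\Delta\cap h(\Delta)$ lies in $\Delta\cap F=\alpha$ and in $h(\Delta)\cap h(F)=h(\alpha)$). Hence when $h(\alpha)\ne\alpha$ the swap at $\alpha$ and its $h$-image at $h(\alpha)$ do not interfere and the count drops; when $h(\alpha)=\alpha$ one may take $\Delta'=h(\Delta)$ and must resolve the resulting invariant corner along $\alpha$ equivariantly---using precisely the local model $(x,y)\mapsto(y,x)$ that you deploy at the resolution stage, but which is first needed here.

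Two further points, one structural and one comparative. Structurally, the oriented double curve sum is canonical (orientations determine the smoothing at every circle, so no resolution choices exist, and $h$-invariance of the smoothed surface is automatic since $h$ carries the oriented pair $(F,\tilde F)$ to the pair with both orientations reversed); consequently, once all circles are essential, the two level surfaces $G_1,G_2$ of the sum are already disjoint and your ``strictly fewer circles after perturbation'' induction collapses to a single step. What does need proof is (a) that $h(G_1)=G_2$ rather than $h(G_i)=G_i$---this follows because $h$ exchanges the two boundary longitudes, which distinguish the two levels---and (b) that no sphere components arise (this is what essentiality is really for), so that the count $\chi(G_1)+\chi(G_2)=2\chi(F)$, with each Seifert piece satisfying $\chi\le 1-2g(K)$, forces both pieces to be minimal genus. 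Comparatively, the paper avoids equivariant general position altogether by following Edmonds: fix an $h$-invariant metric, take an area minimizer $F_0$ with boundary a longitude $\ell_0$ disjoint from $h(\ell_0)$, and note that $h(F_0)$ is again an area minimizer since $h$ is an isometry; disjointness then comes from an exchange-and-roundoff contradiction with area minimality, with the Meeks--Yau trick handling non-transverse intersections. The analytic machinery buys automatically what your combinatorial argument must earn by hand, while your route, once the essentiality step is repaired as above, has the advantage of being elementary and metric-free.
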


\begin{theorem}
\label{thm:disjoint-Seifert2}
Let $(K,h,\delta)$ be a marked strongly invertible knot, and 
let $F$ be a minimal genus Seifert surfaces for $K$ 
such that $F$ and $h(F)$ have disjoint interiors.
Then there is a minimal genus Seifert surface $S$ for $(K,h,\delta)$
whose interior is disjoint from the interiors of $F$ and $h(F)$.
\end{theorem}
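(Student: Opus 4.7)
The plan is to start with any $h$-invariant Seifert surface $S_0$ for $(K,h,\delta)$ realizing the equivariant genus $g(K,h,\delta)$ (which exists by Proposition \ref{prop:existence-invariant-Seifert}), and to deform it into the desired $S$ by an equivariant cut-and-paste procedure that preserves the $h$-invariance, the marking $\Fix(h)\cap S=\delta$, and the genus. Since $h(F\cup h(F)) = F\cup h(F)$, we may first perform an equivariant ambient isotopy of $S_0$ fixing $\delta$ to place $S_0$ in transverse general position with respect to $F\cup h(F)$; the resulting 1-manifold $S_0\cap(F\cup h(F))$ is then invariant under $h$ and decomposes as $(S_0\cap F)\cup h(S_0\cap F)$. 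Among all invariant, genus-$g(K,h,\delta)$ Seifert surfaces for $(K,h,\delta)$ in transverse general position with respect to $F\cup h(F)$, choose one minimizing the number of components of $S_0\cap(F\cup h(F))$; suppose for contradiction this number is positive.

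Choose a component $\gamma$ of $S_0\cap F$. By standard innermost-disk and outermost-arc arguments on the minimal-genus surface $F$ --- which is incompressible and $\partial$-incompressible in the knot exterior --- we may assume $\gamma$ (together with a sub-arc of $K$ if $\gamma$ is an arc) cobounds an innermost disk $D_F\subset F$ whose interior is disjoint from $S_0$; the symmetric disk $h(D_F)\subset h(F)$ is then paired with $h(\gamma)\subset S_0\cap h(F)$. Surgering $S_0$ along $D_F$ and simultaneously along $h(D_F)$ produces a new $h$-invariant surface $S_0'$ with $\partial S_0' = K$, after discarding any sphere components arising from an inessential surgery. If $\gamma$ also cobounds a disk on $S_0$, the surgery is inessential and $S_0'$ has the same genus as $S_0$ but strictly fewer components in its intersection with $F\cup h(F)$, contradicting minimality of the complexity. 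Otherwise the surgery is essential, and $S_0'$ has strictly smaller genus; arranging via general position that $\gamma$, $D_F$, and $h(D_F)$ lie away from $\Fix(h)$, we keep $\Fix(h)\cap S_0' = \delta$, producing an invariant Seifert surface for $(K,h,\delta)$ of genus less than $g(K,h,\delta)$, a contradiction.

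The principal obstacle is performing these surgeries fully equivariantly when $\gamma$ happens to be $h$-invariant ($h(\gamma)=\gamma$), or more generally when $D_F$ and $h(D_F)$ intersect outside of $K$. In the generic case, $D_F$ and $h(D_F)$ meet only along arcs in $K$ (courtesy of the hypothesis $\interior F \cap \interior h(F) = \emptyset$), so the two surgeries commute and can be performed independently. In the degenerate case one must either arrange, by an additional equivariant isotopy, that $D_F$ is itself $h$-equivariant, or apply a nested innermost argument inside the $h$-invariant piece of $S_0\cap F$ to produce an equivariant innermost choice, a strategy analogous to Edmonds' equivariant compression in \cite{Edmonds}. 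It is here that the disjointness of $\interior F$ and $\interior h(F)$ is crucially used, and verifying that these equivariant surgeries can always be carried out without disturbing the arc $\delta$ constitutes the main technical content of the argument.
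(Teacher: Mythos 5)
There is a genuine gap: your procedure consists entirely of innermost-disk surgeries, and these can only remove components of $S_0\cap(F\cup h(F))$ that are \emph{inessential} in one of the two surfaces. Your argument implicitly assumes that some remaining component $\gamma$ always bounds an innermost disk $D_F\subset F$; but $F$ is incompressible, so any $\gamma$ that is essential in $F$ bounds no such disk, and the case where every component of $S_0\cap(F\cup h(F))$ is essential in \emph{both} surfaces is exactly where the paper's proof begins after its Claim \ref{claim:essential-intersection0} (which carries out the same equivariant disk surgeries you describe). In that situation no compression is available: one cannot even compress $S_0$ equivariantly in general, since $S_0$ is minimal genus only among \emph{invariant} surfaces and may be compressible with $g(K,h,\delta)>g(K)$ --- see Example \ref{example:compression}, where the invariant surface for $8_3$ has compressing loops $\alpha$ and $h(\alpha)$ that intersect nontrivially, so no equivariant compression exists. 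The paper's actual mechanism for the essential case is of a different kind: it passes to the infinite cyclic cover $p:\tilde E\to E$, cuts along $p^{-1}(F\cup h(F))$ into levels $E_j$, minimizes the top level $r$ reached by the invariant lift $S_0$, extracts the submanifold $W=p(S_0^-\cap E_r)$, proves $W\cap h(W)=\emptyset$ (this is where $r>0$ and the disjointness of $\interior F$ and $\interior h(F)$ are really used), and then performs an equivariant \emph{exchange of subsurfaces} $A\subset S$ and $B\subset F$ along $W\cup h(W)$, controlling the genus by an Euler characteristic count that crucially uses the minimality of $g(F)$; this strictly decreases $r$, giving the contradiction. Nothing in your proposal plays the role of this cut-and-paste along a region, so the proof cannot close.

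A secondary point: the difficulty you flag as the ``main technical content'' --- equivariance of the disk surgeries when $h(\gamma)=\gamma$ or when $D_F$ and $h(D_F)$ interact --- is in fact vacuous. After isotoping $\partial S_0$ off $\partial F\cup\partial h(F)$ (the paper places $\partial S$ in the annulus of $\partial E(K)$ between them), all intersections are loops in the interiors, so there are no arcs and no ``sub-arc of $K$'' cases; and since $\interior F\cap \interior h(F)=\emptyset$, a loop $\gamma\subset \interior F$ can never equal $h(\gamma)\subset \interior h(F)$, and $D_F\subset F$, $h(D_F)\subset h(F)$ are automatically disjoint, not merely ``meeting along arcs in $K$''. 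So the degenerate case you propose to handle by a nested equivariant argument never occurs, while the genuinely hard case --- essential intersection curves --- is the one your outline leaves untouched.
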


Note that a Seifert surface for a fibered knot\index{knot!fibered knot} 
is a fiber surface\index{surface!fiber surface} 
if and only if it is of minimal genus.
For fibered knots, we have a stronger conclusion
as in Proposition \ref{prop:fibered} below.

\begin{proposition}
\label{prop:fibered}
Let $K$ be a strongly invertible fibered knot.
Then for any marked strongly invertible knot $(K,h,\delta)$,
there is an $h$-invariant fiber surface for $K$ containing $\delta$,
and hence
$g(K,h,\delta) = g(K)$.
\end{proposition}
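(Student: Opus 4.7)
The plan is to combine Theorems~\ref{thm:disjoint-Seifert} and~\ref{thm:disjoint-Seifert2} with the structure of fibered knot complements to produce the desired invariant fiber surface.

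First, I apply Theorem~\ref{thm:disjoint-Seifert} to $(K,h)$ to obtain a minimal genus Seifert surface $F$ for $K$ with $\interior F \cap \interior h(F) = \emptyset$. Since $K$ is fibered, $F$ is a fiber surface of the fibration $\pi \colon M \to S^1$ of $M := S^3 \setminus \nu(K)$, and so is $h(F)$; moreover, two fiber surfaces with disjoint interiors are isotopic to parallel fibers. Then I apply Theorem~\ref{thm:disjoint-Seifert2} to $(K,h,\delta)$ and $F$ to produce a minimum genus Seifert surface $S$ for $(K,h,\delta)$ whose interior is disjoint from those of $F$ and $h(F)$. By construction $S$ is $h$-invariant and satisfies $\Fix(h) \cap S = \delta$.

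It remains to show $g(S) = g(K)$. Cutting $M$ along $F \cup h(F)$ yields either one piece (when $F = h(F)$) or two pieces (when $F \neq h(F)$), each an $I$-bundle of the form $F \times I$ coming from the fibration. Since $S \cap M$ is connected, it lifts to a properly embedded surface $\tilde S$ in one such piece, with $\partial \tilde S$ lying on the side annulus $\partial F \times I$. A standard innermost-disk/outermost-arc argument in $F \times I$, using the incompressibility of $F$, shows that $\tilde S$ is isotopic to a horizontal fiber $F \times \{t\}$, so $g(S) = g(\tilde S) = g(F) = g(K)$. This gives $g(K,h,\delta) \le g(K)$, and the reverse inequality is immediate.

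The main hurdle is the last-paragraph isotopy claim: that a properly embedded orientable surface in $F \times I$ with boundary a longitude on the side annulus is isotopic to a horizontal fiber. This follows from standard 3-manifold techniques but requires careful bookkeeping of how $\tilde S$ might meet the horizontal boundary pieces $F \times \{0, 1\}$. Importantly, the isotopy need not be $h$-equivariant, since it is only used to compute the genus of $S$.
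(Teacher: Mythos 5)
Your setup is fine as far as it goes: applying Theorems~\ref{thm:disjoint-Seifert} and~\ref{thm:disjoint-Seifert2} does place an invariant minimal genus Seifert surface $S$ for $(K,h,\delta)$ inside the product piece $E_\delta\cong F\times I$ cut off by the two disjoint fibers $F$ and $h(F)$, with $\partial S$ a longitude on the side annulus. The gap is the final step. At that point there are no intersection curves left to remove (by construction $S$ is disjoint from $F\cup h(F)$), so the ``innermost-disk/outermost-arc'' machinery has nothing to act on; what you are really invoking is the classification of surfaces in a product (horizontal or vertical, after Waldhausen), and that applies only to surfaces that are themselves incompressible and boundary-incompressible. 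Incompressibility of $F$ is beside the point; you need incompressibility of $S$, and that is exactly what you may not assume: $S$ is of minimal genus only among \emph{invariant} Seifert surfaces, and such surfaces can be compressible. The paper's Example~\ref{example:8-3} shows this is a real phenomenon: $8_3$ admits no invariant incompressible Seifert surface at all, so its minimal genus invariant surface (genus $2$, while $g(8_3)=1$) is compressible. Concretely, a copy of the fiber stabilized by a small trivial tube is a properly embedded surface in $F\times I$ with the same longitudinal boundary and genus $g(K)+1$, not isotopic to a horizontal level; nothing in your argument rules out that $S$ is of this kind. (For fibered knots it ultimately is not, but that is the content of the proposition, so assuming it is circular.)

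What would repair your approach is an \emph{equivariant} statement about the product piece: $h$ preserves $E_\delta\cong F\times I$ (since $h(\delta)=\delta$) and exchanges its two horizontal boundary fibers, and one needs that such an involution is conjugate to a level-exchanging product involution, which yields an invariant horizontal surface $F\times\{1/2\}$ containing $\delta$. That statement is essentially Tollefson's equivariant fibration theorem, and indeed the paper's own proof uses it directly, bypassing Theorems~\ref{thm:disjoint-Seifert} and~\ref{thm:disjoint-Seifert2} altogether: by \cite[Theorem 2]{Tollefson} one may assume $h$ preserves the fibration $p\colon E(K)\to S^1$ and induces $\check h(z)=\bar z$ on $S^1$, so $p^{-1}(1)$ and $p^{-1}(-1)$ are $h$-invariant fibers realizing the two markings $\delta$ and $\delta^c$, whence $g(K,h,\delta)=g(K)$. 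As written, your proof does not close without an input of this kind.
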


\medskip

Theorem \ref{thm:disjoint-Seifert} 
is also motivated by our interest in the 
{\it Kakimizu complex}\index{Kakimizu complex}
$MS(K)$ of the knot $K$
and in the natural action of
the symmetry group\index{symmetry group}
$\Sym(S^3,K)=\pi_0\Diff(S^3,K)$ on $MS(K)$.
The complex was introduced by Kakimizu \cite{Kakimizu1988}
as the flag simplicial complex 
whose vertices correspond to the (isotopy classes of) minimal genus Seifert surfaces for $K$
and edges to pairs of such surfaces with disjoint interiors.
The following corollary of Theorem \ref{thm:disjoint-Seifert} 
may be regarded as a refinement of a special case of 
a theorem proved by Przytycki and Schultens \cite[Theorem 1.2]{Przytycki-Schultens}.

\begin{corollary}
\label{cor:disjoint-Seifert}
Let $(K,h)$ be a strongly invertible knot,
and let $h_*$ be the automorphism of $MS(K)$
induced from of the strong inversion $h$.
Then one of the following holds.
\begin{enumerate}
\item
There exists a vertex of $MS(L)$
that is fixed by $h_*$.
\item
There exists a pair of adjacent vertices of $MS(K)$
that are exchanged by $h_*$. 
(Thus 
the mid point of the edge spanned by the vertices is
fixed by $h_*$.)
\end{enumerate}
\end{corollary}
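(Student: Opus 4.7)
The plan is to invoke Theorem \ref{thm:disjoint-Seifert} and translate its geometric conclusion into the combinatorial language of the Kakimizu complex. I would first apply Theorem \ref{thm:disjoint-Seifert} to produce a minimal genus Seifert surface $F$ for $K$ such that $F$ and $h(F)$ have disjoint interiors. Let $v$ denote the vertex of $MS(K)$ represented by the isotopy class of $F$; then $h_*(v)$ is the vertex represented by $h(F)$, which is again a minimal genus Seifert surface for $K$ since $h$ is a diffeomorphism of $S^3$ leaving $K$ invariant.

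I would then split into two cases according to whether $v = h_*(v)$. In the first case, $F$ is isotopic to $h(F)$ in the appropriate sense built into the definition of $h_*$, so $v$ itself is a vertex of $MS(K)$ fixed by $h_*$, yielding alternative (1). In the second case, the two distinct vertices $v$ and $h_*(v)$ are represented by minimal genus Seifert surfaces with disjoint interiors, which is exactly Kakimizu's criterion for $v$ and $h_*(v)$ to be joined by an edge of $MS(K)$. Since $h_*$ is an involution that exchanges these adjacent vertices, it fixes the midpoint of the edge they span, giving alternative (2).

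I do not anticipate any real obstruction: once Theorem \ref{thm:disjoint-Seifert} is in hand, the corollary is essentially a verbatim reformulation in the language of $MS(K)$. The only point worth recording explicitly is that $h_*$ is a well-defined simplicial involution of $MS(K)$, which follows from the fact that the diffeomorphism $h$ carries minimal genus Seifert surfaces to minimal genus Seifert surfaces and preserves the disjoint-interior relation between them; this is precisely what guarantees that the dichotomy above exhausts all possibilities.
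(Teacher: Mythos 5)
Your proof is correct and is exactly the argument the paper intends: the corollary is stated as an immediate consequence of Theorem \ref{thm:disjoint-Seifert}, obtained by taking the vertex $v=[F]$, noting $h_*(v)=[h(F)]$, and splitting into the cases $v=h_*(v)$ (alternative (1)) and $v\ne h_*(v)$, where disjointness of interiors gives the edge of $MS(K)$ required for alternative (2). Your explicit remark that $h_*$ is a well-defined simplicial involution is a point the paper leaves tacit, but there is no substantive difference in approach.
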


For a $2$-bridge knot $K$,
the structure of $MS(K)$ is described by \cite[Theorem 3.3]{Sakuma1994};
in particular, the underlying space $|MS(K)|$ is 
identified with a linear quotient of a cube.
The actions of strong inversions
on $MS(K)$ will be described in \cite{Hirasawa-Hiura-Sakuma2}.

\medskip

This paper is organized as follows.
In Section \ref{sec:basic-fact}, 
we recall basic facts about
strongly invertible knots.
In Section \ref{sec:invariant-Seifert-surface},
we give two constructions of invariant Seifert surfaces 
and prove a basic proposition (Proposition \ref{prop:lifting})
concerning the quotient of an invariant Seifert surface
by the strong inversion.
In Section \ref{sec:Proof-MainTheorem},
we give an argument for determining the equivariant genera  
(Proposition \ref{prop:band-number} and Corollary \ref{cor:band-number}),
and prove Theorem \ref{thm:main}
by using that argument (Example \ref{example:main-example}).
In Section \ref{sec:proof-SubMainTheorem},
we prove our main theorems, Theorems 
\ref{thm:disjoint-Seifert} and
\ref{thm:disjoint-Seifert2}.
In the final section, 
Section \ref{sec:fourgenus}, 
we briefly review old and new studies of
equivariant $4$-genera of symmetric knots.

Finally, we explain relation of this paper and A'Campo's work.
Motivated by isolated singularities of complex hypersurfaces,
A'Campo \cite{A'Campo}
formulated a way to construct fibered links in the 3-sphere from 
{\it divides}\index{divide}, 
i.e., immersions of copies of $1$-manifolds in a disk. 
He proved that the link obtained from a connected divide is
strongly invertible and fibered.
In \cite[Section 3]{A'Campo}, we can find a beautiful description of a pair of
invariant fiber surfaces for the link.
This nicely illustrates Proposition \ref{prop:fibered}.
(See \cite{Hirasawa} for visualization of these links and their fiber surfaces.)
Furthermore, Couture \cite{Couture} introduced the more general notion of
ordered Morse signed divides, and proved that
every strongly invertible link is isotopic to the link of an ordered Morse signed divide.
As A'Campo suggested to the authors, it would be interesting to study invariant Seifert surfaces 
from this view point.

\section{Basic facts concerning strongly invertible knots}
\label{sec:basic-fact}

Recall that a knot $K$ in $S^3$ is said
to be {\it invertible}\index{knot!invertible knot} 
if there is an orientation-preserving diffeomorphism $h$ that maps $K$ to itself 
reversing orientation.
The existence of non-invertible knots was proved by Trotter \cite{Trotter1963}
using $2$-dimensional hyperbolic geometry.
(His proof is based on the fact that the pretzel knots admit the structure
of Seifert fibered orbifolds, where the base orbifolds are generically hyperbolic.)
If $h$ can be chosen to be an involution
then $K$ is said to be 
{\it strongly invertible},\index{strongly invertible}\index{knot!strongly invertible knot} 
and $h$ is called a {\it strong inversion}\index{strong inversion}.  
Though strong invertiblity of course implies invertiblity, 
the converse does not hold,
as shown by Whitten \cite{Whitten}.
However, for hyperbolic knots, the converse also holds
by the Mostow-Prasad rigidity theorem.
Moreover, a sufficient condition
for invertible knots to be strongly invertible was given by Boileau \cite{Boileau}.
The finiteness theorem of symmetries of $3$-manifolds proved by Kojima \cite{Kojima} 
by using the orbifold theorem \cite{BLP, BoP, CHK, DL}
implies that  any knot 
admits only finitely many strong inversions up to equivalence.
Here two strong inversions $h$ and $h'$ of $K$ are regarded to be {\it equivalent}
if there is an orientation-preserving diffeomorphism $\varphi$
of $S^3$ such that 
$h'=\varphi h\varphi^{-1}$.
However, the number of strong inversions up to equivalence 
for a satellite knot can be arbitrary large 
\cite[Lemma 5.4]{Sakuma1986b}.
For torus knots and hyperbolic knots,
the number is at most $2$, as described in 
Proposition \ref{prop:strong-inversion} below.
Recall that a knot $K$ in $S^3$ is said to have
{\it cyclic period}\index{cyclic period}\index{period!cyclic period} $2$ 
or {\it free period}  $2$,\index{free period}\index{period!free period} 
respectively,
if there is an orientation-preserving smooth involution $f$ of $S^3$ that maps $K$ to itself 
preserving orientation, such that $\Fix(f)$ is $S^1$ or $\emptyset$.

\begin{proposition}
\label{prop:strong-inversion}
{\rm (1)} The trivial knot admits a unique strong inversion up to equivalence.

{\rm (2)} A nontrivial torus knot admits a unique strong inversion up to equivalence.

{\rm (3)} An invertible  hyperbolic knot admits exactly two or one strong inversions up to equivalence
according to whether it has (cyclic or free) period $2$ or not.
\end{proposition}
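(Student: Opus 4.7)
The plan is to treat the three cases separately, each reducing to a geometric-structure theorem for $3$-manifolds.

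For (1), the positive solution of the Smith conjecture for involutions (Waldhausen) says that every smooth orientation-preserving involution of $S^3$ with $1$-dimensional fixed set is conjugate to a standard rotation $\rho$ by $\pi$ about an unknotted axis. Given two strong inversions $h_1,h_2$ of the trivial knot $K$, I would first conjugate each $h_i$ to $\rho$; this produces two $\rho$-invariant unknots $K_1,K_2$, each meeting $\Fix(\rho)$ transversely in two points. An equivariant innermost-disk argument applied to a spanning disk of each $K_i$ shows that $K_i$ bounds a $\rho$-invariant disk, and from such disks one constructs a $\rho$-equivariant ambient isotopy carrying $K_1$ onto $K_2$, yielding the equivalence of $(K,h_1)$ and $(K,h_2)$.

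For (2), I would use the classical fact that the exterior of a nontrivial torus knot is Seifert fibered in an essentially unique way, with base orbifold a disk carrying two cone points of orders $p$ and $q$. By the equivariant theorem of Meeks--Scott (or equivalently the orbifold theorem), any strong inversion $h$ can be isotoped to preserve this Seifert fibration. Since $p\neq q$, the induced involution on the base orbifold must fix both cone points and is therefore determined up to orbifold isotopy. Reconstructing $h$ from its actions on the base and on a regular fiber recovers the standard strong inversion coming from the involution $(z,w)\mapsto(\bar z,\bar w)$ on $S^3\subset\mathbb{C}^2$, which preserves the Clifford torus carrying $T(p,q)$.

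For (3), I would apply Mostow--Prasad rigidity to identify $\Sym(S^3,K)$ with a finite subgroup $G$ of $\Isom(S^3\setminus K)$, so that equivalence of strong inversions becomes $G$-conjugacy of involutions in $G$. Let $G^+\subset G$ be the index-$2$ subgroup preserving $K$-orientation (index is $2$ by invertibility). A strong inversion is an involution in the coset $G\setminus G^+$ whose fixed set is $S^1$ meeting $K$ in two points. A $K$-orientation-reversing involution $h$ automatically has $\Fix(h)\cap K\neq\emptyset$ (otherwise $h$ would act freely on a meridian while reversing its orientation, which is impossible), and a standard argument then forces $\Fix(h)\cong S^1$ meeting $K$ transversely in two points. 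Using the classification of symmetry groups of invertible hyperbolic knots as dihedral groups $D_{2k}$ with $G^+=C_k$, the $k$ reflections in the coset $G\setminus G^+$ form a single $G$-conjugacy class when $k$ is odd (equivalently, $G^+$ contains no element of order $2$, so $K$ has no period $2$), and exactly two $G$-conjugacy classes when $k$ is even (equivalently, $G^+$ contains an involution, which is a cyclic or free period $2$). The principal obstacle is this last case analysis: one must input the classification of hyperbolic-knot symmetry groups and carefully verify that every $K$-reversing involution in $G$ has the required fixed-set topology. Cases (1) and (2) proceed fairly directly once the relevant structure theorem is invoked.
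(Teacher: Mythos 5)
Your overall architecture coincides with the route the paper actually takes: the paper does not prove Proposition \ref{prop:strong-inversion} in-house, but attributes (1) to Marumoto and (2), (3) to Sakuma's earlier work, whose ingredients are exactly the ones you invoke --- the Meeks--Scott theorem and the orbifold theorem for the Seifert-fibered case, and, for the hyperbolic case, Riley's observation (resting on the Smith conjecture) that $\Isom^+(S^3\setminus K)\cong D_{2n}=\langle f,h\rangle$, followed by the count of conjugacy classes of reflections: one class (represented by $h$) when $n$ is odd, two classes (represented by $h$ and $fh$) when $n$ is even, with $f^{n/2}$ supplying the cyclic or free period $2$. Your reconstruction of (2) and (3) thus matches the paper's sketch, and your equivariant-disk argument for (1) is a plausible rendering of Marumoto's cited proof.

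There is, however, a genuine gap in your reduction in (3): Mostow--Prasad rigidity alone does not justify the claim that ``equivalence of strong inversions becomes $G$-conjugacy of involutions in $G$.'' Mostow gives that a diffeomorphism of the complement is \emph{isotopic} to an isometry; applied to a smooth involution $h$ this produces an isometric involution isotopic to $h$, but isotopic is not conjugate. The realization half of your dictionary --- that every strong inversion is conjugate to an isometric one --- requires the geometrization of finite group actions (the orbifold theorem, or equivariant Ricci flow), which is precisely why the paper lists the orbifold theorem among the inputs to the cited proof; you invoke it for torus knots but omit it here, where it is equally indispensable. You also need that isometries of the complement extend to $S^3$ (they preserve the meridian, by Gordon--Luecke), and your dictionary should use conjugacy by orientation-preserving isometries only, since equivalence in the paper demands $\varphi$ orientation-preserving and, for an amphicheiral $K$, conjugation in the full isometry group could a priori merge classes. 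Finally, your parenthetical argument that a $K$-reversing involution has fixed points on $K$ (``acts freely on a meridian while reversing its orientation'') is garbled: the correct and immediate reason is that an orientation-reversing involution of the circle $K$ has exactly two fixed points, after which Smith theory and nonemptiness of $\Fix(h)$ give $\Fix(h)\cong S^1$ meeting $K$ transversely in those two points.
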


The first assertion is due to Marumoto \cite[Proposition 2]{Marumoto}, and 
the remaining assertions are proved in \cite[Proposition 3.1]{Sakuma1986}
(cf. \cite[Section 4]{ALSS})
by using the result of Meeks-Scott \cite{Meeks-Scott}
on finite group actions on Seifert fibered spaces
and the orbifold theorem.
Another key ingredient of the proof of the third assertion is
the following consequence of 
Riley's observation \cite[p.124]{Riley} based on the positive solution of the Smith conjecture
\cite{Morgan-Bass}.
For a hyperbolic invertible knot $K$, the orientation-preserving isometry group
of the hyperbolic manifold $S^3\setminus K$ is the dihedral group $D_{2n}$
of order $2n$ for some $n\ge 1$, i.e., 
\begin{align*}
\label{dihdral-group}
\Isom^+(S^3\setminus K) \cong
\langle f, h \ | \ f^n=1,\ h^2=1, hfh^{-1}=f^{-1}\rangle.
\end{align*}
Here $h$ extends to a strong inversion of $K$ and $f$ 
extends to a periodic map of $S^3$ of period $n$ which maps $K$ to itself preserving orientation.
If $n$ is odd, then $K$ does not have cyclic nor free period $2$,
and any strong inversion of $K$ is equivalent to that obtained from $h$.
If $n=2m$ is even, then $f^m$ extends to an involution of $S^3$
which gives cyclic or free period $2$ of $K$,
and any strong inversion of $K$ is equivalent to that obtained from
exactly one of $h$ and $fh$. 

By \cite[Proposition 1.2]{Kodama-Sakuma},
the above Proposition \ref{prop:strong-inversion}(3) is refined to the following proposition
concerning marked strongly invertible knots.

\begin{proposition}
\label{prop:marked-strong-inversion}
Let $K$ be an invertible hyperbolic knot.

{\rm (1)} Suppose $K$ does not have cyclic nor free period $2$.
Then $K$ admits a unique strong inversion up to equivalence,
and the two marked strongly invertible knots associated with $K$ are inequivalent.
Thus there are precisely two marked strongly invertible knots
associated with $K$.

{\rm (2)} Suppose $K$ has cyclic period $2$.
Then $K$ admits precisely two strong inversions up to equivalence,
and for each strong inversion,
the associated two marked strongly invertible knots are inequivalent.
Thus there are precisely four marked strongly invertible knots
associated with $K$.

{\rm (3)} Suppose $K$ has free period $2$.
Then $K$ admits precisely two strong inversions up to equivalence,
and for each strong inversion,
the associated two marked strongly invertible knots are equivalent.
Thus there are precisely two marked strongly invertible knots
associated with $K$.
\end{proposition}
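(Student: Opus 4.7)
The plan is to extend the proof outline of Proposition \ref{prop:strong-inversion}(3) to track not only the strong inversion $h$ but also the choice of marking arc $\delta \subset \Fix(h)$. First, I would invoke Mostow--Prasad rigidity together with the positive solution of the Smith conjecture: any equivalence $(K, h, \delta) \sim (K, h', \delta')$ is realized by an orientation-preserving diffeomorphism of $S^3$ whose restriction to $S^3 \setminus K$ is isotopic to an isometry, hence lies in $\Isom^+(S^3 \setminus K) \cong D_{2n} = \langle f, h \mid f^n = h^2 = 1, hfh = f^{-1}\rangle$. Thus equivalences of marked strongly invertible knots correspond bijectively to elements $g \in D_{2n}$ satisfying $ghg^{-1} = h'$ and $g(\delta) = \delta'$ (via the unique extension of $g$ to $S^3$).

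After fixing a representative in each conjugacy class of reflections in $D_{2n}$ (namely $h$ in case (1), and both $h$ and $fh$ in cases (2) and (3), as already described for Proposition \ref{prop:strong-inversion}), the count of inequivalent markings per inversion reduces to the orbit structure of the centralizer $Z(h)$ acting on the two arcs $\delta, \delta'$ of $\Fix(h) \setminus K$. Here $Z(h) = \{1, h\}$ when $n$ is odd and $Z(h) = \{1, h, f^m, f^m h\}$ when $n = 2m$ is even. Since $h$ fixes $\Fix(h)$ pointwise, the decisive question is whether $f^m$ preserves or exchanges the two arcs (and $f^m h|_{\Fix(h)} = f^m|_{\Fix(h)}$, so $f^m h$ gives no new information).

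In case (1), $Z(h) = \{1, h\}$ acts trivially on $\{\delta, \delta'\}$, yielding two inequivalent markings. In case (3), $f^m$ acts freely on $S^3$, so $f^m|_{\Fix(h)}$ is a fixed-point-free involution of $\Fix(h) \cong S^1$, i.e., the antipodal rotation; it interchanges the two points of $\Fix(h) \cap K$ and hence swaps $\delta$ with $\delta'$, so each strong inversion supports only one marking class. In case (2), where $\Fix(f^m) \cong S^1$ is disjoint from $K$, the key claim is that $\Fix(f^m) \cap \Fix(h)$ is nonempty (and in fact consists of two points). Granting this, $f^m|_{\Fix(h)}$ is an orientation-reversing involution of $S^1$ with two fixed points; since these fixed points lie off $K$, one sits in the interior of $\delta$ and the other in the interior of $\delta'$, so $f^m$ preserves each arc setwise. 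This gives two inequivalent markings per inversion, and the counts of marked strongly invertible knots in the three cases follow by combining with Proposition \ref{prop:strong-inversion}(3).

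The main obstacle is verifying that $\Fix(f^m) \cap \Fix(h) \ne \emptyset$ in case (2). I would approach this by analyzing the commuting $(\ZZ/2)^2$-action of $\langle h, f^m \rangle$ on $S^3$. Passing to the quotient orbifold $S^3 / \langle f^m \rangle \cong S^3$, branched over the circle $\Fix(f^m)$, the strong inversion $h$ descends to an orientation-preserving involution $\bar h$; an application of the equivariant Smith conjecture (or the orbifold theorem) identifies $\Fix(\bar h)$ as an unknotted circle, and lifting back one obtains the required two intersection points of $\Fix(h)$ with $\Fix(f^m)$. Alternatively, a direct analysis using Newman-type arguments on the possible topological types of fixed sets of commuting orientation-preserving involutions of $S^3$ yields the same conclusion.
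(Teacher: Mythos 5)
Your route is essentially the one the paper points to but does not write out: the paper proves nothing here itself, deferring entirely to \cite[Proposition 1.2]{Kodama-Sakuma}, after setting up $\Isom^+(S^3\setminus K)\cong D_{2n}$ exactly as you use it. Your case analysis is correct. For $n$ odd the centralizer $\{1,h\}$ fixes $\Fix(h)$ pointwise, giving two inequivalent markings; in the free-period case $f^m|_{\Fix(h)}$ is a free involution of a circle, so it swaps the two points of $\Fix(h)\cap K$ and with them the two arcs; and in the cyclic-period case your key claim that $\Fix(f^m)\cap\Fix(h)$ consists of two points is true, and both of your suggested proofs can be made to work. For the quotient route, the extra observation you need is that the third involution $hf^m$ also reverses the orientation of $K$ (so it has two fixed points on $K$, and in particular is not free); then Smith theory forces $\Fix(\bar h)$ to be a single circle, which is only possible if the images of $\Fix(h)$ and $\Fix(hf^m)$ in $S^3/\langle f^m\rangle$ are arcs sharing their two endpoints on the branch circle, i.e.\ $\Fix(h)\cap\Fix(f^m)\ne\emptyset$. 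Alternatively, Borel's formula for the $(\ZZ/2\ZZ)^2$-action, $3-n(G)=\sum_H\bigl(n(H)-n(G)\bigr)$ over the three order-$2$ subgroups with all three fixed-set dimensions equal to $1$, gives $n(G)=0$, i.e.\ exactly two points, at once.

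Two gaps should be flagged. Minor: in case (2), the fixed points of $f^m|_{\Fix(h)}$ separate $p$ from $q$ not because they ``lie off $K$'', but because $f^m$ preserves $\{p,q\}=\Fix(h)\cap K$ and fixes neither point, hence swaps them; a reflection of $S^1$ swapping $p$ and $q$ must have its two fixed points in different components of $\Fix(h)\setminus\{p,q\}$, and only then does it preserve each arc setwise. More substantial: your opening bijection between equivalences of marked knots and elements $g\in D_{2n}$ with $ghg^{-1}=h'$ and $g(\delta)=\delta'$ is asserted rather than proved, and the condition $g(\delta)=\delta'$ does not follow from Mostow rigidity alone. Rigidity (via $\pi_0\Diff(S^3\setminus K)\cong\Isom(S^3\setminus K)$) yields an isometry $g$ isotopic to $\varphi|_{E(K)}$ with $ghg^{-1}=h$, but the isotopy is not equivariant, so a priori $\psi:=\varphi g^{-1}$ could commute with $h$, be isotopic to the identity, and still swap the two arcs of $\Fix(h)$ --- which would collapse your marking counts in (1) and (2). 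Ruling this out requires an argument: for instance, $\psi$ descends to the quotient orbifold $E(K)/h$ and induces an automorphism of the orbifold fundamental group restricting to an inner automorphism of $\pi_1(E(K))$; since the centralizer of $\pi_1(E(K))$ in the orbifold group is trivial, the induced automorphism is itself inner, hence preserves the two distinct conjugacy classes of meridional involutions attached to the two singular arcs, so $\psi$ cannot swap them. This invariance step is precisely the refinement supplied by Kodama and Sakuma, and your proposal is incomplete without it, even though every other step is sound.
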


Since every $2$-bridge knot admits cyclic period $2$,
we obtain the following corollary.

\begin{corollary}
\label{cor:nonhyperbolic-msi-2br-knot}
Every hyperbolic $2$-bridge knot has precisely four associated 
marked strongly invertible knots up to equivalence.
\end{corollary}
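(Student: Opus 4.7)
The plan is to deduce this corollary directly from Proposition \ref{prop:marked-strong-inversion}(2), which enumerates the marked strongly invertible knots of an invertible hyperbolic knot with cyclic period $2$. So the task reduces to verifying that a hyperbolic $2$-bridge knot $K$ satisfies all three hypotheses of that proposition.

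Concretely, I would check: (i) $K$ is hyperbolic, (ii) $K$ is invertible, and (iii) $K$ has cyclic period $2$. Item (i) is the standing assumption. Item (iii) is the classical fact recalled in the sentence just preceding the corollary, realized by a $\pi$-rotation whose fixed set is a circle disjoint from $K$ and which interchanges the two bridges; this can be read off directly from the standard plat or rational tangle presentation of $K$. Item (ii) follows from the well-known fact that every $2$-bridge knot is strongly invertible (visible from the same plat projection via a $\pi$-rotation about an axis that meets $K$ in two points), since a strong inversion is an orientation-preserving involution of $S^3$ reversing the orientation of $K$ and thus witnesses invertibility.

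With (i)--(iii) in hand, Proposition \ref{prop:marked-strong-inversion}(2) asserts that $K$ admits precisely two inequivalent strong inversions, and that for each such $h$ the two choices of subarc $\delta$ of $\Fix(h)$ bounded by $\Fix(h)\cap K$ yield inequivalent marked strongly invertible knots. Multiplying, one obtains exactly $2\times 2 = 4$ marked strongly invertible knots associated with $K$ up to equivalence, as claimed.

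There is no real technical obstacle here: the whole argument is a verification of hypotheses and an appeal to the preceding proposition. The only ingredients beyond that proposition are the two symmetry statements for $2$-bridge knots, both of which are completely standard and, if desired, can be cited from Schubert's classification rather than argued from scratch.
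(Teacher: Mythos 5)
Your proposal is correct and follows essentially the same route as the paper, which deduces the corollary in one line from Proposition \ref{prop:marked-strong-inversion}(2) together with the classical fact that every $2$-bridge knot has cyclic period $2$; your extra verification that $2$-bridge knots are (strongly) invertible, which the paper leaves implicit, is accurate and harmless.
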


The four marked strongly invertible knots
associated with a hyperbolic $2$-bridge knot
are (implicitly) presented in \cite[Proposition 3.6]{Sakuma1986} and 
\cite[Section 4]{ALSS}.

\begin{remark}
\label{rem:nonhyperbolic-msi-2br-knot}
{\rm
By Proposition \ref{prop:strong-inversion}(1) and (2), 
we can easily see that the trivial knot has a unique associated 
marked strongly invertible knot and that
every torus knot has precisely two associated 
marked strongly invertible knots.
}
\end{remark}

We note that
Barbensi, Buck, Harrington and Lackenby \cite{Barbensi-Buck-Harrington-Lackenby}
shed new light on the strongly invertible knots
in relation with the knotoids\index{knotoid}
introduced by Turaev \cite{Turaev}.
They prove that there is a $1-1$ correspondence
between unoriented knotoids, up to \lq\lq rotation'',
and strongly invertible knots, 
up to \lq\lq inversion'' \cite[Theorem 1.1]{Barbensi-Buck-Harrington-Lackenby}.
Proposition \ref{prop:marked-strong-inversion} is a variant of 
their result \cite[Theorem 1.3]{Barbensi-Buck-Harrington-Lackenby}
concerning knotoids.

\section{Invertible diagrams and invariant Seifert surfaces}
\label{sec:invariant-Seifert-surface}

In this section, we describe two proofs
of the following basic proposition
which shows the existence of an invariant Seifert surface
for every marked strongly invertible knot.
One is due to Boyle and Issa \cite{Boyle-Issa2021a},
and the other is due to Hiura \cite{Hiura}.

\begin{proposition}
    \label{prop:existence-invariant-Seifert}
    Every marked strongly invertible knot admits an invariant Seifert surface.
    Namely, for every marked strongly invertible knot $(K,h,\delta)$,
    there is an $h$-invariant Seifert surface $S$ for $K$ such that 
    $\Fix(h)\cap S=\delta$.
    \end{proposition}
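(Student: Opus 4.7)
My plan is to pass to the quotient orbifold. Since $h$ is an orientation-preserving involution of $S^3$ whose fixed set is the circle $\Fix(h)$, the positive solution of the Smith conjecture gives $S^3/h \cong S^3$, and the quotient map $q\colon S^3 \to S^3/h$ is the double cover branched along the unknot $\gamma := q(\Fix(h))$. The knot $K$ descends under $q$ to an arc $\alpha$ with endpoints on $\gamma$ (because $h|_K$ is an involution of $K \cong S^1$ with two fixed points), and $\delta$ descends homeomorphically to an arc $\delta' \subset \gamma$ sharing those endpoints with $\alpha$. Hence $\beta := \alpha \cup \delta'$ is a simple closed curve in $S^3$, and it will suffice to produce a compact orientable surface $\bar S \subset S^3$ with $\partial \bar S = \beta$ and $\bar S \cap \gamma = \delta'$; the preimage $S := q^{-1}(\bar S)$ will then be the desired invariant Seifert surface.

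To build $\bar S$, let $N(\gamma)$ be an open tubular neighborhood of $\gamma$ and $T := \cl(S^3 \setminus N(\gamma))$ the complementary solid torus, so $H_1(T) \cong \mathbb{Z}$ is generated by a meridian $\mu$ of $\gamma$. Close the properly embedded arc $\alpha \cap T$ up to a loop in $T$ by concatenating it with a pushoff arc $\tilde\delta \subset \partial T$ projecting homeomorphically to $\delta'$. The $\mathbb{Z}$-family of isotopy classes of such pushoffs rel endpoints (indexed by the number of meridional twists inserted) realizes every class in $H_1(T)$; in particular, for a unique isotopy class of $\tilde\delta$ the closed-up loop is null-homologous in $T$ and therefore bounds a compact orientable surface $\bar S_0 \subset T$. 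Capping off $\bar S_0$ with the annular strip in $\cl(N(\gamma))$ between $\tilde\delta$ and $\delta'$ produces the required $\bar S$.

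Next I would set $S := q^{-1}(\bar S)$ and verify each clause of the proposition. The local model of $q$ near a smooth interior point of the branch locus (two half-planes upstairs glue together across the branch arc to form a full disk) shows that $S$ is a smoothly embedded compact surface with $\partial S = K$, with $\delta \subset \interior(S)$, and with $\Fix(h) \cap S = \delta$; the identity $q \circ h = q$ makes $S$ invariant under $h$; and $S$ is orientable because $S \setminus \delta$ is an unbranched double cover of the orientable surface $\bar S \setminus \delta'$, while the two local sheets join smoothly across $\delta$ via the disk model just described.

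The main obstacle is precisely the construction of $\bar S$ that is disjoint from $\gamma$ away from $\delta'$: an arbitrary spanning surface for $\beta$ will generically intersect $\gamma$ transversely at interior points, and the $\mathbb{Z}$-parameter choice of $\tilde\delta$ above is exactly what is needed to adjust the homology class in $T$ so that these intersections can be removed. If one preferred to avoid the orbifold detour entirely, one could instead follow Hiura \cite{Hiura} and begin from an $h$-equivariant projection of $K$ in which $\Fix(h)$ appears as a symmetry axis, applying a version of Seifert's algorithm that pairs Seifert circles and crossings into $h$-orbits to build the invariant Seifert surface by hand.
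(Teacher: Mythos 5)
Your overall strategy---pass to the quotient $S^3/h\cong S^3$, build a spanning surface for the knot $\alpha\cup\delta'$ that meets the branch circle $\gamma$ only in $\delta'$, and lift---is exactly the paper's transvergent/quotient approach (Section \ref{sec:invariant-Seifert-surface}), but your final orientability step contains a genuine gap. The inference ``$S$ is orientable because $S\setminus\delta$ is an unbranched double cover of the orientable surface $\bar S\setminus\delta'$, and the sheets join smoothly across $\delta$'' is a non sequitur: orientability of $S\setminus\delta$ does not imply orientability of $S$ (a M\"obius band minus a spanning arc is a disk), and smooth gluing across the branch arc says nothing about whether an orientation extends---indeed the deck involution acts near $\delta$ as a reflection of $S$, so the real question is a global matching of sheet orientations. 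The correct criterion is the paper's Proposition \ref{prop:lifting}: the preimage of a spanning surface $\check S$ is orientable if and only if every loop in $\interior\check S$ is orientation-preserving or -reversing in $\check S$ according to whether its linking number with the branch circle is even or odd (Condition (C)). For your \emph{orientable} $\bar S$ this forces every interior loop to have \emph{even} linking with $\gamma$, a condition you never verify.

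And your homological choice of pushoff does not secure it: arranging that the closed-up curve $\alpha'=(\alpha\cap T)\cup\tilde\delta$ is null-homologous in the solid torus $T$ constrains only the boundary of $\bar S_0$, not its interior loops. A Seifert surface for $\alpha'$ in $T$ can perfectly well carry handles whose core circles wind an odd number of times around the core of $T$, i.e.\ have $\lk(\cdot,\gamma)$ odd; by the band analysis in the proof of Proposition \ref{prop:lifting} (Figure \ref{fig:F3}), an orientable band downstairs whose core has odd linking lifts orientation-incompatibly, so $q^{-1}(\bar S)$ is then an $h$-invariant spanning surface but \emph{non-orientable}---not a Seifert surface. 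Repairing this is precisely the content of the paper's two proofs: Hiura's algorithm pairs the hooks of $k$ around $O$ so that every attached band has the parity demanded by Condition (C), while Boyle--Issa observe that Seifert's algorithm applied to an \emph{intravergent} diagram (axis perpendicular to the diagram plane, through one crossing whose crossing arc is $\delta$) automatically produces an invariant Seifert surface, since the algorithm is canonical and hence commutes with the rotation. Your closing remark instead gestures at the transvergent picture, where a naive equivariant Seifert algorithm does not obviously work ($h$ reverses the orientation of $K$); as written, the orientability of your lift is unproven, and it can genuinely fail without a parity argument of the above kind.
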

    
Following \cite[Definition 3.3]{Boyle-Issa2021a}
we say that a symmetric diagram representing a strongly invertible knot $(K,h)$
is 
\begin{enumerate}
\item
{\it intravergent}\index{knot diagram!intravergent diagram}  
if $h$ acts as half-rotation around an axis perpendicular to the plane of the diagram
(see Figure \ref{fig:F1}(a)), and
\item
{\it transvergent}\index{knot diagram!transvergent diagram} 
if $h$ acts as half-rotation around an axis contained within the plane of the diagram
(see Figure \ref{fig:F5}(a)).
\end{enumerate}

\figer{f1}{0.95}
{fig:F1}{A transvergent diagram and a symmetric knotoid }

\subsection{Construction from an intravergent diagram and knotoid}
As Boyle and Issa note
in \cite[Proposition 1]{Boyle-Issa2021a},
Seifert's algorithm applied to an intravergent diagram 
produces an invariant Seifert surface.
To be precise, for a marked strongly invertible knot $(K,h,\delta)$,
let $\Gamma$ be an intravergent diagram 
of the strongly invertible knot $(K,h)$,
such that $\delta$ is the crossing arc 
at the crossing through which the axis passes.
Then, by applying Seifert's algorithm to $\Gamma$,  
we obtain an invariant Seifert surface for $(K,h,\delta)$
(Figure \ref{fig:F1}(b)).

By cutting the over- or under-path which contains a fixed point,
we have a rotationally symmetric knotoid diagram $\Gamma'$ (Figure \ref{fig:F1}(c)).
By applying Seifert's smoothing to $\Gamma'$, one obtains
Seifert circles and an arc.
Then replacing the arc by a thin disk, we have an invariant 
Seifert surface for $(K,h,\delta)$
(see  Figure \ref{fig:F1}(d)).
Note that theses surfaces in general have different genera.

\subsection{Construction from a transvergent diagram}
In order to construct an invariant Seifert surface
from a transvergent diagram of a strongly invertible knot $(K,h)$, 
we consider the quotient $\theta$-curve
$\theta(K,h)$ defined as follows.
Let $\pi:S^3\to S^3/h\cong S^3$ be the projection.
Then $O:=\pi(\Fix(h))$ is a trivial knot and $k:=\pi(K)$ is an arc
such that $O\cap k=\partial k$.
Thus the union $\theta(K,h):=O\cup k$ forms a $\theta$-curve embedded in $S^3$:
we call it the {\it quotient $\theta$-curve}\index{$\theta$-curve}\index{quotient!quotient $\theta$-curve} 
of 
the strongly invertible knot $(K,h)$.
For a marked strongly invertible knot $(K,h,\delta)$,
set $\check\delta:=\pi(\delta)$ and $\check K:= k\cup \check\delta$
(Figure \ref{fig:F2}).

\figer{f2}{0.95}{fig:F2}{
The quotient $\theta$-curve $\theta(K,h)=O\cup k$
and the constituent knot
$\check K=k\cup\check\delta=\pi(K\cup \delta)$.
Note that $\theta(K,h)$ consists of three edges
$k=\pi(K)$, $\check\delta=\pi(\delta)$
and $\check\delta^c=\pi(\cl(\Fix(h)\setminus \delta))=\cl(O\setminus \check\delta)$.}

Observe that if $S$ is an $h$-invariant Seifert surface for $(K,h,\delta)$,
then its image $\check S:=\pi(S)$ in $S^3/h$ is 
a {\it spanning surface}\index{surface!spanning surface}
for the knot $\check K$,
i.e., a compact surface in $S^3/h$ with boundary $\check K$,
which is disjoint from the interior of the arc 
$\check\delta^c:=\cl(O\setminus \check\delta)$.
Conversely, if $\check S$ is a spanning surface for $\check K$
which is disjoint from the interior of the arc $\check\delta^c$,
then its inverse image $\pi^{-1}(\check S)$ is 
an $h$-invariant spanning surface for $(K,h,\delta)$,
namely an $h$-invariant spanning surface for $K$
whose intersection with $\Fix(h)$ is equal to $\delta$.
However, $\pi^{-1}(\check S)$ is not necessarily orientable.
The following proposition gives a necessary and sufficient condition 
for $\pi^{-1}(\check S)$ to be orientable and so an invariant Seifert surface for $(K,h,\delta)$.
 
\begin{proposition}
\label{prop:lifting}
Under the above notation, the following hold
for every marked strongly invertible knot $(K,h,\delta)$.

{\rm (1)}
If $S$ is an invariant Seifert surface for $(K,h,\delta)$,
then its image $\check S=\pi(S)$ in $S^3/h$ 
is a spanning surface for $\check K=k\cup \check\delta$
disjoint from the interior of $\check\delta^c$,
and satisfies the following condition.
\begin{itemize}
\item[\rm{(C)}]
For any loop $\gamma$ in $\interior\check S$,
$\gamma$ is orientation-preserving or -reversing in $\check S$
according to whether the linking number
$\lk(\gamma, O)$ is even or odd. 
\end{itemize}

{\rm (2)}
Conversely, if $\check S$ is a spanning surface for the knot $\check K$ in $S^3/h$
which is disjoint from the interior of $\check\delta^c$ 
and satisfies Condition {\rm (C)},
then its inverse image $\pi^{-1}(\check S)$ in $S^3$
is an invariant Seifert surface for  $(K,h,\delta)$
\end{proposition}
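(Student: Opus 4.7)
The approach is to exploit the structure of $\pi:S^3\to S^3/h$ as a double branched cover with branch locus $\Fix(h)\to O$. Off the branch locus, $\pi$ restricts to a free double cover $S^3\setminus\Fix(h)\to (S^3/h)\setminus O$, and I would first verify the standard fact that this cover is classified by the cohomology class $\chi\in H^1((S^3/h)\setminus O;\ZZ/2)$ with $\chi([\gamma])=\lk(\gamma,O)\bmod 2$. (The point is that $H_1((S^3/h)\setminus O)$ is generated by a meridian of $O$, and a small disk transverse to $O$ lifts through $\pi$ to a disk transverse to $\Fix(h)$, whose boundary double-covers the meridian.) With this identification, Condition (C) translates into the cohomological identity $w_1(\interior\check S)=\chi|_{\interior\check S}$ in $H^1(\interior\check S;\ZZ/2)$, since $\interior\check S$ is disjoint from $O$.

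For (1), I would first dispatch the structural assertions: that $\check S=\pi(S)$ is a compact surface in $S^3/h$ with $\interior\check S\cap O=\emptyset$ and $\partial\check S=k\cup\check\delta$. The arc $\check\delta$ appears as a boundary edge because $h|_S$ is orientation-reversing (its fixed set $\delta$ is $1$-dimensional), so the quotient $S\to\check S$ folds $S$ along $\delta$, producing a half-disk with boundary edge on $O$. For Condition (C), I would observe that $\pi$ restricts to a free double cover $\interior S\setminus\interior\delta\to\interior\check S$, which is the pullback of the above cover and hence classified by $\chi|_{\interior\check S}$. Since the total space is open in the orientable $S$, it is orientable. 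Thus either the cover is disconnected (whence $\chi=0$ and $\interior\check S$ is orientable, giving $w_1=0=\chi$), or it is connected, in which case it must be the orientation double cover of $\interior\check S$ and $w_1=\chi$. In either case Condition (C) holds.

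For (2), I would set $S:=\pi^{-1}(\check S)$ and verify the four required properties. The $h$-invariance is tautological, and $S\cap\Fix(h)=\pi^{-1}(O\cap\check S)=\pi^{-1}(\check\delta)=\delta$ because $\pi$ is one-to-one over $O$. To confirm $\partial S=K$ with $\delta\subset\interior S$, I would work in the local transverse model of the branched cover near a point of $\check\delta$: the half-disk of $\check S$ with boundary edge on $O$ unfolds under $\pi^{-1}$ to a full disk with $\delta$ as an interior diameter. Orientability of $S$ then follows by running the argument of (1) in reverse: Condition (C) says the double cover $S\setminus\interior\delta\to\interior\check S$ has classifying class $\chi=w_1(\interior\check S)$, so its total space is orientable (either as two disjoint copies of an orientable base when $\chi=0$, or as the orientation double cover when $\chi\neq 0$). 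Finally, orientability extends across $\delta$: since $\delta$ is a codimension-one submanifold of $\interior S$, any loop in $S$ is isotopic to one avoiding $\delta$, and orientation character is a homotopy invariant.

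The crux of the argument is the cohomological dictionary between the classifying class $\chi$ of a double cover, the Stiefel--Whitney class $w_1$ of the base, and orientability of the total space, together with the identification of $\chi$ with the linking-number-parity class. A secondary care point is the smooth/manifold structure of $\pi^{-1}(\check S)$ along $\delta$, but this reduces to a short check in the standard local model of a double branched cover near its branch curve.
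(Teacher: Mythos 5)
Your covering-space framework (the class $\chi(\gamma)=\lk(\gamma,O)\bmod 2$ classifying the unbranched part of $\pi$, and the dictionary between $\chi$, $w_1$, and orientability of the total space) is sound and agrees with Remark \ref{rem:lifting}(1), but the argument has a genuine gap at exactly the two places where Condition (C) is at stake, both times for the same reason: you never use the fact, which you state in passing, that $h|_S$ is orientation-reversing. In part (1), the claim that a \emph{connected} double cover with orientable total space ``must be the orientation double cover'' is false: a connected double cover of a torus by a torus has orientable total space and $\chi\neq 0$, yet $w_1=0\neq\chi$. In general, orientability of the total space only yields $w_1\in\{0,\chi\}$ (the kernel of $p^*$ on $H^1(\,\cdot\,;\ZZ/2\ZZ)$ is $\{0,\chi\}$ for a connected double cover), and the bad case $w_1=0$, $\chi\neq 0$ --- i.e.\ $\check S$ orientable but containing a loop with odd linking number with $O$, which violates (C) --- is excluded by nothing you invoke. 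To rule it out you must use the deck transformation: since $\Fix(h|_S)=\delta$ is $1$-dimensional, $h|_S$ reverses orientation on $S$, so the deck involution of $S\setminus\delta\to\interior\check S$ is orientation-reversing; hence the quotient of an annular neighborhood of the (connected) preimage of an odd-linking loop $\gamma$ is a M\"obius band, giving $w_1(\gamma)=1$ whenever $\chi(\gamma)=1$.

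Part (2) has a parallel gap at the last step: ``any loop in $S$ is isotopic to one avoiding $\delta$'' is false. The mod $2$ intersection number of a loop with the properly embedded arc $\interior\delta\subset\interior S$ is a homotopy invariant, and loops meeting $\delta$ once genuinely occur (the core of a M\"obius band versus a spanning arc; band cores crossing the axis in surfaces such as the one in Figure \ref{fig:F6}(b)). Correspondingly, orientability of $S\setminus\interior\delta$ does \emph{not} imply orientability of $S$: a M\"obius band minus a spanning arc is a disk. So after correctly deducing from $\chi=w_1$ that $S\setminus\interior\delta$ is orientable, you must still check that an orientation can be chosen so as to extend across $\delta$; this again requires the reflection model along $\delta$ (when $\chi\neq 0$, orient the connected $S\setminus\delta$ and note that the deck involution reverses it, which in the local model $(x,y)\mapsto(x,-y)$ is precisely the condition for the two sides to glue compatibly; when $\chi=0$, orient one sheet arbitrarily and its $h$-image oppositely). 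The paper's proof sidesteps both issues by a different device, the orbifold handle decomposition $\check S=D\cup(\cup_i B_i)$ with $\check\delta\subset\partial D$: the lift $\pi^{-1}(D)$ is a disk containing $\delta$, so compatibility across $\delta$ is automatic, and (C) is verified band by band on the core loops $\gamma_i$, whose classes generate $H_1(\check S)$. Your route can be repaired along the lines indicated, but as written both directions of the equivalence are unproved precisely in the case that separates Condition (C) from the orientability data you do establish.
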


\begin{proof}
(1)
Note that 
$\check S\cong S/h$ is regarded as an orbifold which has $\check\delta$ as 
a reflector line.
We consider an \lq\lq orbifold handle decomposition'' $\check S=D\cup(\cup_i B_i)$,
where 
(a) $D$ is an \lq\lq orbifold $0$-handle'', namely $D$ is a disk such that 
$\partial \check S \cap D=\check\delta$,
and (b) $\{B_i\}$ are $1$-handles attached to $D$,
namely each $B_i$ is a disk such that
$B_i\cap D =\partial B_i\cap \partial D$ 
consists of two mutually disjoint arcs 
which are disjoint from the reflector line $\check \delta$.
This handle decomposition of the orbifold $\check S$
lifts to the handle decomposition 
$S=\pi^{-1}(D)\cup (\cup_i \pi^{-1}(B_i))$ of the surface $S$,
where $\pi^{-1}(D)$ is an $h$-invariant $0$-handle 
and each $\pi^{-1}(B_i)$ consists of a pair of $1$-handles attached to the disk $\pi^{-1}(D)$.
For each $i$, $D\cup B_i$ is an annulus or a M\"obius band
which is obtained as the quotient of the orientable surface 
$\pi^{-1}(D\cup B_i)\subset S$ by the (restriction of) the involution $h$.
Let $\gamma_i$ be a core loop of $D\cup B_i$.
Then we can see that the projection $\pi:\pi^{-1}(D\cup B_i)\to D\cup B_i$ is as shown 
in Figure \ref{fig:F3}(a), (b)
according to whether $\lk(\gamma_i,O)$ is even or odd.
Hence $D\cup B_i$ is an annulus or a M\"obius band
accordingly.
Since the homology classes of $\{\gamma_i\}$ generate $H_1(\check S)$, 
this observation implies that $\check S$ satisfies Condition (C).

(2) This can be proved by reversing the above argument.
\end{proof}

\figer{f3}{0.8}
{fig:F3}{The band $D\cup B_i$ (bottom) and its inverse image $\pi^{-1}(D\cup B_i)$ (top).
The linking number $\lk(\gamma_i,O)$ is even in (a) and odd in (b).}

\begin{remark}
\label{rem:lifting}
{\rm
(1) Condition (C) is equivalent to the condition that
the homomorphism $\iota:H_1(\check S;\ZZ/2\ZZ) \to \ZZ/2\ZZ$
defined by $\iota(\gamma):=\lk(O,\gamma) \pmod 2$ is 
identical with the orientation homomorphism.

(2) We have $\beta_1(S)=2\beta_1(\check S)$, where $\beta_1$ denotes 
the first Betti number,
because if $b$ is the number of $1$-handles $\{B_i\}$
in the proof, then 
$\beta_1(\check S)=b$ and  $\beta_1(S)=2b$.
}
\end{remark}

From a given transvergent diagram of a strongly invertible knot $(K,h)$,
we can easily draw a diagram of the quotient $\theta$-curve $\theta(K,h)$
(see Figure \ref{fig:F5}(b)).
There are various ways of modifying the diagram into a \lq\lq good'' diagram 
from which we can construct a surface $\check S$ for $\theta(K,h)$
that satisfies the conditions in Proposition \ref{prop:lifting}. 

Hiura \cite{Hiura} gave an algorithm for obtaining such a diagram of $\theta(K,h)=O \cup k$
as follows (see Figure \ref{fig:F4}).\\
(a)
Let $k$ hook 
$O \setminus \check\delta$ in even times in the uniform way.
\\
(b) Travel along $k$ and enumerate the hooks from $1$ to $2n$. Then
slide the hooks so that they are paired and arranged from the top to the bottom along $O$.
Note that one travels between $(2i-1)^{\rm st}$ and $(2i)^{\rm th}$ hooks in 
one of the four routine ways according to the orientation of the hooks.\\
(c) Surger $k$ along the bands $\{B_i\}_{i=1}^n$ arising in the pairs of hooks
to obtain a set of arcs $k'$. 
Arrangement in (b) allows us to reset the orientation of $k'$ as depicted.

Apply  Seifert algorithm to $k' \cup \check{\delta}$ to obtain a Seifert surface $S'$.
Then attaching the bands $\{B_i\}_{i=1}^n$ to $S'$
yields a spanning surface  $\check S$ for 
$k\cup \check{\delta}$ which satisfies the 
conditions in Proposition \ref{prop:lifting}(2).
Therefore, the inverse image $S:=\pi^{-1}(\check S)$ is an invariant Seifert surface for
$(K,h,\delta)$.

\figer{f4}{0.9}
{fig:F4}{Hiura's algorithm to obtain an invariant Seifert surface from an intravergent diagram}

\begin{example}
\label{example:8-3}
{\rm
Let $(K,h,\delta)$ be the marked strongly invertible knot with $K=8_{3}$
as illustrated by 
the transvergent diagram in Figure \ref{fig:F5}(a).
Then by applying the algorithm to the diagram,
we obtain a genus $2$ invariant Seifert surface $S$
for $(K,h,\delta)$ as shown in Figure \ref{fig:F5}(c).
This is a minimal genus Seifert surface for $(K,h,\delta)$ 
and so $g(K,h,\delta)=2$, as shown below.
By Hatcher-Thurston \cite[Theorem 1] {Hatcher-Thurston}
or by Kakimizu \cite{Kakimizu2005} succeeding the work of Kobayashi \cite{Kobayashi},
$8_{3}$ has precisely two genus $1$ Seifert surfaces up to equivalence.
(They are obtained by applying Seifert's algorithm to the alternating diagram,
where there are two different ways of attaching a disk to the unique big Seifert circle.)
Obviously they are interchanged by $h$, and hence not $h$-invariant.

Kakimizu \cite{Kakimizu2005} in fact showed that the two genus $1$ Seifert surfaces
are the only incompressible Seifert surfaces for $8_{3}$.
Thus $8_{3}$ does not even admit an $h$-invariant incompressible Seifert surface.
So the result of Edmonds and Livingston \cite[Corollary 2.2]{Edmonds-Livingston}
that every periodic knot admits an invariant incompressible Seifert surface 
does not hold for strongly invertible knots.
}
\end{example}

\figer{f5}{0.99}
{fig:F5}{A transvergent diagram for $8_{3}$ and an invariant Seifert surface}

\section{Proof of Theorem \ref{thm:main}}
\label{sec:Proof-MainTheorem}

By Proposition \ref{prop:lifting}
and Remark \ref{rem:lifting},
we can characterize the equivariant genus $g(K,h,\delta)$ 
in terms of the quotient $\theta$-curve $\theta(K,h)=k\cup \check \delta\cup \check\delta^c$
and the constituent knot $\check K=k\cup \check \delta$ as follows.

\begin{proposition}
\label{prop:band-number}
Let $(K,h,\delta)$ be a marked strongly invertible knot.
Then $g(K,h,\delta)$ is equal to the minimum of
$\beta_1(\check S)$ where 
$\check S$ runs over the spanning surfaces for 
the constituent knot $\check K=k\cup \check \delta$ of
the quotient $\theta$-curve $\theta(K,h)$
that is
disjoint from the interior of the remaining edge  $\check\delta^c$ 
and satisfies Condition (C).
\end{proposition}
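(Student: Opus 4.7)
The plan is to derive the identity directly from Proposition \ref{prop:lifting} together with Remark \ref{rem:lifting}(2), reducing the equivariant genus to the first Betti number of a surface in the orbifold quotient.

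First, I would invoke Proposition \ref{prop:lifting} to set up a bijective correspondence. Part (2) of that proposition tells us that any spanning surface $\check S$ for $\check K$ disjoint from $\interior\check\delta^c$ and satisfying Condition (C) lifts, via $\pi^{-1}$, to an invariant Seifert surface $S=\pi^{-1}(\check S)$ for $(K,h,\delta)$; part (1) tells us that conversely every invariant Seifert surface $S$ projects to such a $\check S=\pi(S)$. Since $S$ is $h$-invariant, these two operations are mutually inverse, so invariant Seifert surfaces for $(K,h,\delta)$ are in bijection with the class of spanning surfaces described in the statement.

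Next, I would translate the genus of $S$ into $\beta_1(\check S)$. Because $S$ is a connected orientable surface with a single boundary component $K$, we have $g(S)=\beta_1(S)/2$. Remark \ref{rem:lifting}(2) gives $\beta_1(S)=2\beta_1(\check S)$, so $g(S)=\beta_1(\check S)$. Taking the minimum over both sides of the correspondence produces the desired equality $g(K,h,\delta)=\min\beta_1(\check S)$. One small technicality is that the spanning surfaces appearing in the statement are not required to be connected, but discarding any closed components disjoint from $\check K$ only decreases $\beta_1$ and preserves Condition (C); moreover the lift $\pi^{-1}(\check S)$ of a connected $\check S$ is itself connected because the branch arc $\delta$ is nonempty, so the minimum is attained on connected $\check S$, and the lift is a genuine Seifert surface.

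There is essentially no substantive obstacle: the real work has already been carried out in Proposition \ref{prop:lifting} and Remark \ref{rem:lifting}, and what remains is bookkeeping about connectedness together with the standard relation between genus and first Betti number for a compact orientable surface with a single boundary component.
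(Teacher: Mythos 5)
Your proposal is correct and takes essentially the same route as the paper, which states Proposition \ref{prop:band-number} as an immediate consequence of Proposition \ref{prop:lifting} and Remark \ref{rem:lifting}: the lifting correspondence $S\leftrightarrow\check S$ combined with $g(S)=\beta_1(S)/2$ and $\beta_1(S)=2\beta_1(\check S)$ yields the equality, exactly as you argue. Your added bookkeeping (discarding closed components of $\check S$ and noting the lift of a connected $\check S$ is connected since $\check\delta\neq\emptyset$) correctly fills in details the paper leaves implicit.
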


By relaxing the definition of the crosscap number\index{crosscap number}
, $\gamma(K)$, of a knot $K$
introduced by Clark \cite{Clark},
we define the {\it band number}\index{band number}, 
$b(K)$, to be the 
minimum of $\beta_1(G)$
of all spanning surfaces $G$ for $K$
(see Murakami-Yasuhara \cite{Murakami-Yasuhara}).
In other words,  $b(K)=\min(2g(K),\gamma(K))$.
Then we have the following corollary.

\begin{corollary}
\label{cor:band-number}
$g(K,h,\delta)\ge b(\check K)$.
\end{corollary}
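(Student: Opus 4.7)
The plan is to derive the corollary directly from Proposition \ref{prop:band-number} by comparing two minimizations over nested classes of spanning surfaces. By Proposition \ref{prop:band-number}, the equivariant genus $g(K,h,\delta)$ equals $\min_{\check S} \beta_1(\check S)$, where $\check S$ ranges over spanning surfaces for $\check K$ in $S^3/h$ that are disjoint from the interior of $\check\delta^c$ and satisfy Condition (C). By definition, $b(\check K)$ equals $\min_G \beta_1(G)$ where $G$ ranges over \emph{all} spanning surfaces for $\check K$ in $S^3/h$, with no constraint involving $\check\delta^c$ or the linking numbers with $O$.

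First I would observe that the class over which $\beta_1(\check S)$ is minimized in Proposition \ref{prop:band-number} is a subclass of the class over which $\beta_1(G)$ is minimized in the definition of $b(\check K)$: any spanning surface for $\check K$ which avoids the interior of $\check\delta^c$ and satisfies Condition (C) is, in particular, a spanning surface for $\check K$. Hence the infimum of $\beta_1$ over the smaller class is at least the infimum over the larger one, giving
\begin{equation*}
g(K,h,\delta) \;=\; \min_{\check S} \beta_1(\check S) \;\ge\; \min_{G} \beta_1(G) \;=\; b(\check K).
\end{equation*}

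There is essentially no obstacle here; the content of the corollary is entirely carried by Proposition \ref{prop:band-number}, and the inequality reflects the fact that invariance forces additional restrictions on admissible quotient surfaces (both the placement relative to $\check\delta^c$ and the parity constraint of Condition (C)), which can only increase the minimum first Betti number. It is worth remarking in the proof that, in general, the inequality may be strict: a spanning surface achieving $b(\check K)$ need not avoid $\check\delta^c$ nor satisfy Condition (C), so lifts realizing $b(\check K)$ in $S^3$ need not yield invariant Seifert surfaces for $(K,h,\delta)$.
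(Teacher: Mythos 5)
Your proof is correct and is exactly the argument the paper intends: the corollary is stated as an immediate consequence of Proposition \ref{prop:band-number}, since the surfaces admissible there (disjoint from the interior of $\check\delta^c$ and satisfying Condition (C)) form a subclass of all spanning surfaces for $\check K$, so the minimum of $\beta_1$ over them is at least $b(\check K)$. Your closing remark about possible strictness of the inequality is a sound observation, consistent with the examples in the paper.
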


For any marked strongly invertible knot $(K,h,\delta)$
with $K$ a $2$-bridge knot,
the constituent knot $\check K$ is either the trivial knot or a $2$-bridge knot
(see \cite[Proposition 3.6]{Sakuma1986}).
In his master thesis \cite{Bessho} supervised by the last author,
Bessho described a method for determining the cross cap numbers
of $2$-bridge knots by using the result of Hatcher and Thurston
\cite[Theorem 1(b)] {Hatcher-Thurston}
that classifies the
incompressible and boundary incompressible surfaces in the $2$-bridge knot exteriors.
Hirasawa and Teragaito \cite{Hirasawa-Teragaito}
promoted the method into a very effective algorithm.
For some classes of marked strongly invertible $2$-bridge knots,
we can apply Corollary \ref{cor:band-number} by using
Hirasawa-Teragaito method, as shown in the following example.

\begin{example}
\label{example:main-example}
{\rm
For a positive integer $n$,
let $K_n$ be the the plat closure of a 4-string braid
$(\sigma_2^2 \sigma_1^4)^n$.
Then $K_n$ is the $2$-bridge knot\index{$2$-bridge knot}\index{knot!$2$-bridge knot} 
whose slope $q/p$
has the continued fraction expansion $[2, 4, 2,4, \cdots,2,4]$ of length $2n$. 
Here we employ the following convention of continued fraction expansion, 
which is used in \cite{Hirasawa-Teragaito}.
\[
[a_1,a_2, \cdots ,a_m]:=\cfrac{1}
{a_1-\cfrac{1}
{a_2-\cfrac{1}
{\ddots -\cfrac{1}
{a_{m}
}}}}
\]
Thus $K_n$ is the boundary of a linear plumbing of unknotted annuli 
where the $i^{\rm th}$ band has $2$ or $4$ right-handed half-twists 
according to whether $i$ is odd or even
(see \cite[Fig. 2]{Hatcher-Thurston}).
In particular, $g(K_n)=n$.
Note that $K_n$ is isotopic to the knot 
admitting the strong inversion $h$ in Figure \ref{fig:F6}(a).
Let $(K_n,h,\delta)$ be the marked strongly invertible knot,
where $\delta$ is the long arc in $\Fix(h)$ bounded by $\Fix(h)\cap K_n$
illustrated in Figure \ref{fig:F6}(a).
Then we have
$g(K_n,h,\delta)=2n=g(K_n)+n$, as explained below.

\figer{f6}{1}{fig:F6}{A Seifert surface realizing
the equivariant genus}

Observe that $(K_n, h, \delta)$ is equivalent to the marked strongly invertible knot
bounding the $h$-invariant Seifert surface $S$ of genus 
$2n$ in Figure \ref{fig:F6}(b). 
Consider the quotient surface $\check S=S/h$, 
which is a spanning surface for the knot $\check K:=k\cup \check \delta$.
We show that the band number $b(\check K)$ is equal to $\beta_1(\check S)$.
Then it follows from Corollary \ref{cor:band-number} 
that $S$ is a minimal genus Seifert surface for $(K_n, h, \delta)$. 
To this end, observe that $\check K$ is the $2$-bridge knot
whose slope has the continued fraction expansion 
$\mathcal{C}:=[4,2,4,2,\cdots,4,2]$ of length $2n$ 
(Figure  \ref{fig:F6}(c)).
Since $\mathcal{C}$ does not contain an entry ${0, -1,1}$
and since $\mathcal{C}$ does not contain a sub-sequence $\pm [\cdots, 2, 3, \cdots, 3, 2,\cdots]$
nor $\pm [\cdots, 2, 2,\cdots]$,
it follows from
\cite[Theorems 2 and 3]{Hirasawa-Teragaito}, 
that the length $2n$ of $\mathcal{C}$ is
minimal among all continued fraction expansions of
all fractions representing $\check K$.
Then it follows from \cite[Theorem 1(b)] {Hatcher-Thurston} 
that $b(\check K)=\beta_1(\check S)=2n$ as desired.
}
\end{example}

Theorem \ref{thm:main} follows from the above example
and Proposition \ref{prop:fibered}, which is proved at the end of 
Section \ref{sec:proof-SubMainTheorem}.
Though the above method is also applicable to a certain family of
marked strongly invertible $2$-bridge knots,
there are various cases
where the above simple method does not work.
In fact, there is a case where the knot $\check K=k\cup\check\delta$
is trivial (see Figure \ref{fig:F5}).
The second author \cite{Hiura} treated such a family by applying 
the method in \cite{Hatcher-Thurston} 
to such spanning surfaces as in Proposition \ref{prop:band-number}.
However, there still remained cases where none of the methods described above work.
In the sequel of this paper [28], 
we give a unified determination of the equivariant genera of 
all marked strongly invertible 2-bridge knots using sutured manifolds.
Theorem \ref{thm:disjoint-Seifert2} enabled us to do that without 
invoking \cite{Hatcher-Thurston} 
or \cite{Hirasawa-Teragaito}.

\section{Proof of Theorems \ref{thm:disjoint-Seifert} and \ref{thm:disjoint-Seifert2}}
\label{sec:proof-SubMainTheorem}

For a knot $K$ in $S^3$, let 
$E(K):=S^3\setminus \interior N(K)$ be the {\it exterior}\index{exterior}\index{knot!knot exterior} 
of $K$,
where  $N(K)$ is a regular neighborhood of $K$.
If $F$ is a Seifert surface for $K$,
then after an isotopy, 
$F$ intersects $N(K)$ in a collar neighborhood of $\partial F$,
and $F\cap E(K)$ is a surface properly embedded in $E(K)$
whose boundary is a {\it preferred longitude}\index{preferred longitude}, 
i.e., a longitude of the solid torus $N(K)$
whose linking number with $K$ is $0$.
Conversely, any such surface in $E(K)$ determines a 
Seifert surface\index{Seifert surface}\index{surface!Seifert surface} 
for $K$.
Thus we also call such a surface in $E(K)$ a {\it Seifert surface} for $K$.

\begin{proof}[Proof of Theorem \ref{thm:disjoint-Seifert}]
We give a proof imitating the arguments by Edmonds \cite{Edmonds}.
Let $(K,h)$ be a strongly invertible knot, and 
let $E:=E(K)$ be an $h$-invariant exterior of $K$.
We continue to denote by $h$ the restriction of $h$ to $E$. 
Then Theorem \ref{thm:disjoint-Seifert} is equivalent to the existence
of a minimal genus Seifert surface $F\subset E$ for $K$ such that $F\cap h(F)=\emptyset$.

Fix an $h$-invariant Riemannian metric on $E$  
such that $\partial E$ is convex.
Choose a preferred longitude $\ell_0\subset \partial E$ of $K$
such that $\ell_0$ and $\ell_1:=h(\ell_0)$ are disjoint.
Let $F^*$ be a smooth, compact, connected, orientable surface of genus $g(K)$ with 
one boundary component,
$\psi_0:\partial F^* \to \partial E$ an embedding such that $\psi_0(\partial F^*)=\ell_0$,
and set $\psi_1:=h \psi_0$.
For $i=0,1$, let $\FF_i$ be the space of all piecewise smooth maps
$f:(F^*,\partial F^*)\to (E,\ell_i)$
properly homotopic to an embedding, such that $f|_{\partial F^*}=\psi_i$.
Then we have the following \cite[Proposition 1]{Edmonds}.

\begin{lemma}
Each
$\FF_i$ contains an area minimizer\index{area minimizer},
namely, 
there exists an element $f_i\in \FF_i$ 
whose area 
is minimum among the areas of all elements of $\FF_i$.
Moreover, any area minimizer in $\FF_i$ is an embedding.
\end{lemma}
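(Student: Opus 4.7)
The plan is to follow the strategy of Edmonds \cite{Edmonds}, combining the direct method in the calculus of variations with the embedding theorems for area-minimizing surfaces in irreducible $3$-manifolds due to Meeks--Simon--Yau and Freedman--Hass--Scott.

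For the existence assertion, I would fix $i\in\{0,1\}$ and take a minimizing sequence $\{f_n\}\subset\FF_i$ for the area functional. Two hypotheses keep this sequence from degenerating. First, since $F^*$ has the topological type of a minimal genus Seifert surface for $K$, every map in $\FF_i$ is incompressible, so no compressing disk is available to drop the genus in the limit. Second, the $h$-invariant metric on $E$ has convex boundary, so no area can leak into $\partial E$ and the fixed boundary condition $f_n|_{\partial F^*}=\psi_i$ passes to the limit. Standard compactness for area-bounded piecewise smooth maps of a fixed surface into a compact $3$-manifold then produces a subsequential limit $f_i\in\FF_i$ with $\Area(f_i)=\inf_{\FF_i}\Area$.

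For the embeddedness assertion, I would argue by contradiction. Suppose an area minimizer $f_i$ had a nonempty singular set — a curve of self-intersection or a branch point in the interior. Because $E(K)$ is irreducible and $f_i$ is incompressible, one can locate an innermost disk of self-intersection on $f_i(F^*)$ and perform the standard cut-and-paste exchange of the two sheets meeting along its boundary. After a small roundoff of the resulting seam, the outcome is a new piecewise smooth element of $\FF_i$ of strictly smaller area, contradicting minimality. Hence any area minimizer must be an embedding.

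The main obstacle is the compactness step: a priori, a minimizing sequence could shed a sphere bubble or pinch a handle and converge to a surface of strictly smaller genus, and hence strictly smaller area, leaving $\FF_i$ altogether. Ruling this out requires essentially both the irreducibility of the knot exterior (no nontrivial sphere bubbles survive) and the minimal genus condition on $F^*$ (no simpler surface spans $\ell_i$ in the correct proper homotopy class). Once convergence within $\FF_i$ is secured, the embedding statement follows from the cut-and-paste machinery with only routine bookkeeping at the boundary $\ell_i\subset\partial E$, where convexity makes the boundary regularity automatic.
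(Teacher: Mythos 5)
The paper does not actually prove this lemma: it is quoted verbatim as \cite[Proposition 1]{Edmonds}, and Edmonds in turn derives it from the least-area theory of Meeks--Yau \cite{Meeks-Yau} and Freedman--Hass--Scott \cite{Freedman-Hass-Scott}. Your proposal instead tries to reprove that theory directly, and both of its central steps have genuine gaps. The existence step fails as written: since area is invariant under reparametrization of $F^*$, an area-minimizing sequence in $\FF_i$ has no compactness whatsoever --- the parametrizations can degenerate arbitrarily even while the images behave well --- so there is no ``standard compactness for area-bounded piecewise smooth maps of a fixed surface.'' The genuine existence proofs (Schoen--Yau, Sacks--Uhlenbeck, and the treatment in \cite{Freedman-Hass-Scott} on which Edmonds relies) minimize \emph{energy} for each conformal structure on $F^*$, then minimize over the moduli of conformal structures, and use the $\pi_1$-injectivity of the maps in $\FF_i$, together with the irreducibility of $E(K)$ and the convexity of $\partial E$, to rule out bubbling and degeneration of the conformal structure. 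Your appeal to the minimal genus condition correctly identifies why pinching is excluded at the topological level, but it is not a substitute for this analytic scheme.

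The embeddedness step would also fail as stated. The innermost-disk exchange-and-roundoff trick is the engine of the Meeks--Yau proof of Dehn's lemma for \emph{disks}; for a positive-genus surface, cutting and pasting two sheets of $f_i(F^*)$ along a curve of self-intersection generally changes the homeomorphism type and the proper homotopy class of the surface, so the rounded-off surface need not lie in $\FF_i$, and its strictly smaller area then contradicts nothing. This is precisely why \cite{Freedman-Hass-Scott} need tower and covering-space arguments to show that a least-area map in the homotopy class of an incompressible embedding is an embedding. Moreover, interior branch points cannot be removed by cut-and-paste at all; their absence is part of the regularity theory for area minimizers (Osserman, Gulliver), which your sketch does not invoke. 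In short, you have correctly assembled the hypotheses (irreducibility, incompressibility of minimal genus Seifert surfaces, convex boundary), but the two steps you describe as ``standard'' and ``routine'' are exactly the deep theorems that the paper imports wholesale by citing \cite[Proposition 1]{Edmonds}; a self-contained proof along your lines would have to reproduce them rather than gesture at them.
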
 

Since $h$ is an isometric involution, it follows that
$f_i$ is an area minimizer in $\FF_i$ if and only if 
$h f_i$ is an area minimizer in $\FF_j$,
where $\{i,j\}=\{0,1\}$.
Thus Theorem \ref{thm:disjoint-Seifert} follows from the 
following analogue of \cite[Theorem 2]{Edmonds}.
\end{proof}

\begin{theorem}
\label{thm:Edomonds-Thm2}
For $i=0,1$,
let $f_i$ be an area minimizer in $\FF_i$,
and $F_i\subset E$ the minimal genus Seifert surface for $K$ obtained as the image of $f_i$.
Then $F_0$ and $F_1$ are disjoint.
\end{theorem}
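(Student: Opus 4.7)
The plan is to mimic the argument of \cite[Theorem 2]{Edmonds} in the present involutive setting, replacing its equivariance input by the independent choice of area minimizers in $\FF_0$ and $\FF_1$. I will argue by contradiction: suppose $F_0 \cap F_1 \neq \emptyset$, perturb slightly so that the intersection is transverse, and then use cut-and-paste together with a rounding-off to produce replacement surfaces of strictly smaller total area, contradicting the area-minimality of $f_0$ or $f_1$.

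First, since $\partial F_0 = \ell_0$ and $\partial F_1 = \ell_1$ are disjoint curves on $\partial E$, every component of $F_0 \cap F_1$ is a closed curve lying in $\interior E$. Using innermost-disk arguments together with the incompressibility of minimal genus Seifert surfaces in $E$ and the irreducibility of $E$, I will isotope $F_0$ and $F_1$ so as to eliminate every intersection circle that is inessential on one of the surfaces. After this reduction, I may assume that every circle of $F_0 \cap F_1$ is essential in both $F_0$ and $F_1$, since otherwise the intersection is already empty and we are done.

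Next, pick an essential intersection circle $C$. Because $F_0$ and $F_1$ represent the same generator of $H_2(E, \partial E)$, a homological comparison shows that $C$ must be null-homologous, and hence separating, in each $F_i$; writing $F_i = A_i \cup_C B_i$ with $\ell_i \subset \partial A_i$, I form the exchanged surfaces
\[
F_0' := A_0 \cup_C B_1, \qquad F_1' := A_1 \cup_C B_0.
\]
These are embedded piecewise smooth orientable surfaces with $\partial F_i' = \ell_i$, combined area equal to $\Area(F_0) + \Area(F_1)$, and combined Euler characteristic equal to $\chi(F_0) + \chi(F_1) = 2 - 4g(K)$. Since each $F_i'$ is a connected Seifert surface for $K$, so has genus at least $g(K)$, the sum constraint forces $g(F_i') = g(K)$, and therefore $F_i'$ represents an element of $\FF_i$. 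Finally, rounding off the crease along $C$ by an arbitrarily small perturbation strictly decreases the total area, producing $F_i'' \in \FF_i$ with $\Area(F_0'') + \Area(F_1'') < \Area(F_0) + \Area(F_1)$; hence $\Area(F_i'') < \Area(F_i)$ for at least one $i$, contradicting area-minimality of $f_i$.

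The main obstacle is the topological bookkeeping that guarantees the cut-and-paste outputs $F_i'$ actually lie in $\FF_0$ and $\FF_1$. The critical points are: verifying that essential intersection circles must be separating in both $F_0$ and $F_1$ (which rests on their representing the same class in $H_2(E, \partial E)$), matching the orientations of $A_0$ and $B_1$ so that $F_0'$ is an oriented embedded Seifert surface, and ruling out closed components splitting off during the exchange so that each $F_i'$ remains connected of genus exactly $g(K)$. Each of these is implicit in Edmonds' original argument but must be re-examined in our setting because $\ell_0$ and $\ell_1$ are distinct parallel preferred longitudes rather than a single equivariant boundary curve.
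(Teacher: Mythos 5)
Your overall strategy (exchange and round off to contradict area-minimality) is the right one, but three steps break down, two of them fatally. First, you cannot ``isotope $F_0$ and $F_1$'' to eliminate inessential intersection circles: these surfaces are the \emph{fixed} least-area representatives, and your final contradiction compares areas against precisely these surfaces; after an isotopy they are no longer area minimizers and the contradiction evaporates. In the paper (following \cite{Edmonds}), essentiality of every component of $F_0\cap F_1$ is \emph{derived}, not arranged: an innermost inessential circle permits a disk swap that, after rounding, strictly reduces the area of one minimizer, using area minimality together with incompressibility and the asphericity of $E$. Second, your single-circle exchange is unjustified on two counts. The homological argument shows only that the \emph{total} intersection $F_0\cap F_1$ is null-homologous in each $F_i$ (every loop in $F_0$ evaluates to zero under the generator of $H^1(E)$); an individual essential component $C$ need not be separating --- e.g.\ two non-separating circles with opposite classes are consistent with your constraint. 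Worse, even when $C$ does separate, $A_0\cup_C B_1$ is generally \emph{not embedded}, because the remaining components of $F_0\cap F_1$ lie in the interiors of the pieces being glued. This is exactly why the paper (again following \cite[proof of Theorem 2]{Edmonds}) exchanges along a compact submanifold $W\subset E$ with $\partial W=A\cup B$, $A=W\cap F_0$, $B=W\cap F_1$: swapping all of $A$ against all of $B$ simultaneously produces embedded minimal genus Seifert surfaces $(F_0\setminus A)\cup B$ and $(F_1\setminus B)\cup A$, and the genus bookkeeping you attempt circle-by-circle is packaged into the choice of $W$.

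Third, your opening move --- ``perturb slightly so that the intersection is transverse'' --- ignores that $F_0$ and $F_1$ are least-area (hence minimal) surfaces which may genuinely intersect non-transversally. An arbitrarily small perturbation destroys area-minimality, so the concluding inequality $\Area(F_i'')<\Area(F_i)$ no longer contradicts anything unless you show the area reduction achieved by exchange-and-rounding admits a lower bound \emph{exceeding} the perturbation cost. This is the Meeks--Yau trick \cite{Meeks-Yau}: the paper invokes the local structure of tangential intersections of minimal surfaces from \cite[Lemma 1.4]{Freedman-Hass-Scott} to perturb $F_0$ so that it still meets $F_1$ transversely and nontrivially while the added area is smaller than the guaranteed reduction. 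Without this quantitative control, your proof only handles the (non-generic) transverse case; with it, and with the $W$-exchange replacing your single-circle swap and the derived (rather than arranged) essentiality of intersection circles, your outline becomes the paper's proof.
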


\begin{proof}
The proof is the same as \cite[Proof of Theorem 2]{Edmonds}, as explained below.
Suppose to the contrary that $F_0\cap F_1\ne \emptyset$.
We first assume that $F_0$ and $F_1$ intersect transversely.
Then, by 
the arguments in \cite[the 3rd to the 6th paragraphs of the proof of Theorem 2]{Edmonds}
using the area minimality and the incompressibility of $F_i$ ($i=0,1$) and
the asphericity of $E$,
it follows that every component of $F_0\cap F_1$ is 
essential in both $F_0$ and $F_1$.

By the arguments in 
\cite[the 7th to the final paragraphs of the proof of Theorem 2]{Edmonds},
we see that there is a submanifold $W$ in $E$ 
satisfying the following conditions.
\begin{enumerate}
\item[(a)]
$\partial W=A\cup B$ where $A=W\cap F_0$ and  $B=W\cap F_1$.
\item[(b)]
Both $(F_0\setminus A)\cup B$ and $(F_1\setminus B)\cup A$
are minimal genus Seifert surfaces.
\end{enumerate}
Since $(F_0\setminus A)\cup B$ and $(F_1\setminus B)\cup A$ have corners,
smoothing them reduces area and yields two minimal genus Seifert surfaces,
at least one of which has less area than $F_0$ or $F_1$,
a contradiction.

Finally, we explain the generic case where 
$F_0$ and $F_1$ are not necessarily transversal.
As is noted in \cite[the 2nd paragraph of the proof of Theorem 2]{Edmonds},
by virtue of the Meeks-Yau trick\index{Meeks-Yau trick}
introduced in \cite{Meeks-Yau} and 
discussed in \cite{Freedman-Hass-Scott},
we can reduce to the case where $F_0$ and $F_1$ intersect transversely as follows.
By \cite[Lemma 1.4]{Freedman-Hass-Scott}, we have a precise picture of the situation
where $F_0$ and $F_1$ are non-transversal
(see \cite[Figure 1.2]{Freedman-Hass-Scott}).
By using this fact, $F_0$ is perturbed slightly to a surface $F_0'$, so that
(a) $F_0'$ and $F_1$ intersect transversely and nontrivially, and 
(b) the difference $\Area(F_0')-\Area(F_0)$ is less than a lower bound estimate for the area reduction to be achieved as in the preceding paragraphs. 
Thus the assumption $F_0\cap F_1\ne \emptyset$ again leads to a contradiction.
\end{proof}

\begin{proof}[Proof of Theorem \ref{thm:disjoint-Seifert2}]
Throughout the proof we use the following terminology:
for a topological space $X$ and its subspace $Y$, 
a {\it closed-up component}\index{closed-up component} of $X\setminus Y$
means the closure in $X$ of a component of $X\setminus Y$.
Let $(K,h,\delta)$ be a marked strongly invertible knot
and let $E$ and $h$ be as 
in the proof of Theorem \ref{thm:disjoint-Seifert}.
Then $\Fix(h)\subset E$ is the disjoint union of two arcs $\delta\sqcup\delta^c$,
where $\delta$ denotes the intersection of the original $\delta$ with $E$
and $\delta^c=\Fix(h)\setminus \delta$.
Then, by the assumption of Theorem \ref{thm:disjoint-Seifert2},
there is 
a minimal genus Seifert surface $F\subset E$  
such that $F\cap h(F)=\emptyset$.
Let $E_{\delta}$ and $E_{\delta^c}$ be the closed-up components of 
$E(K)\setminus (F\cup h(F))$ containing $\delta$ and 
$\delta^c$, respectively.
Then there is a minimal genus Seifert surface $S \subset E$ for $(K,h,\delta)$
such that $\partial S$ is 
contained in the interior of the annulus $\partial E\cap \partial E_{\delta}$.
We will construct from $S$ 
a minimal genus Seifert surface for $(K,h,\delta)$
which is properly embedded in 
$E_{\delta}\setminus (F\cup h(F))$.

\begin{claim}
\label{claim:essential-intersection0}
We can choose $S$ so that
$S$ intersects $F\cup h(F)$ transversely and so that
every component of $S\cap(F\cup h(F))$ is essential
in both $S$ and $F\cup h(F)$.
\end{claim}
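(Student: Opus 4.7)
The plan is to apply a standard innermost-disk argument in an equivariant fashion. Two preliminary observations drive the reduction. First, since $F \cap h(F) = \emptyset$ and $\Fix(h)$ is pointwise fixed by $h$, the surface $F$ must be disjoint from $\Fix(h)$: any point of $F \cap \Fix(h)$ would also lie in $h(F)$, contradicting $F \cap h(F) = \emptyset$. Second, $\partial S$ lies in the interior of the annulus $\partial E \cap \partial E_{\delta}$, whose two boundary circles are $\ell_0 = \partial F$ and $\ell_1 = \partial h(F)$, so $\partial S$ is disjoint from $\partial F \cup \partial h(F)$.

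First I would perturb $S$ by a small $h$-equivariant isotopy (for example, by working in the quotient orbifold $E/h$) to arrange that $S$ is transverse to both $F$ and $h(F)$. By the second observation, $S \cap (F \cup h(F))$ then consists only of simple closed curves in the interiors of the surfaces; no arc components can appear. By the first observation, the perturbation can be chosen to avoid $\Fix(h)$, so $\Fix(h) \cap S = \delta$ is preserved, as is $\partial S$. Because $h(S)=S$ and $h$ interchanges $F$ with $h(F)$, the components of the intersection come in disjoint pairs $\{C, h(C)\}$ with $C \subset S \cap F$ and $h(C) \subset S \cap h(F)$, where $C \neq h(C)$ since $F \cap h(F) = \emptyset$.

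Now suppose some component $C \subset S \cap F$ is inessential in $S$ or in $F$ (the case $C \subset S \cap h(F)$ is symmetric). If $C$ is inessential in $S$ but essential in $F$, then an innermost disk bounded by $C$ in $S$ would be a compressing disk for $F$, contradicting the incompressibility of the minimum-genus Seifert surface $F$. Hence $C$ bounds a disk in $F$, and I may choose $C$ innermost on $F$ so that the bounded disk $D \subset F$ has $\interior D \cap S = \emptyset$. Its image $h(D) \subset h(F)$ is then an innermost disk with the analogous property, disjoint from $D$ (since $F \cap h(F) = \emptyset$). I push a bicollar of $C$ in $S$ across $D$ and, simultaneously, a bicollar of $h(C)$ in $S$ across $h(D)$; since $D \cup h(D)$ is $h$-invariant, disjoint from $\Fix(h)$, and disjoint from $\partial E$, these two pushes assemble into an $h$-equivariant ambient isotopy of $S$ that strictly reduces the number of components of $S \cap (F \cup h(F))$ while preserving the genus, the $h$-invariance, the boundary position, and the condition $\Fix(h) \cap S = \delta$.

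Iterating this step finitely many times (the intersection count strictly decreases and is bounded below) yields the required $S$. The main obstacle is keeping the reduction equivariant at every stage while preserving the marking; this is resolved by the hypothesis $F \cap h(F) = \emptyset$, which simultaneously forces $F \cap \Fix(h) = \emptyset$ and supplies disjoint partner disks $D$ and $h(D)$ for each reduction. Because of this disjointness, each step is a single $h$-equivariant push of $S$, and no appeal to an equivariant Dehn lemma or related refinement is needed.
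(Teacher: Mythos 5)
There is a genuine gap, and it lies in your central reduction move. Your step (1) only rules out curves that are inessential in $S$ \emph{and} essential in $F\cup h(F)$; it leaves open the main case in which $C$ bounds an innermost disk $D\subset F$ but is \emph{essential} in $S$. In that case no isotopy of $S$ supported near $D$ can remove $C$: if you take a bicollar $D'\times[-1,1]$ of a slightly larger disk $D'\subset F$, then $S$ meets it in an annulus joining a curve in $D'\times\{-1\}$ to a curve in $D'\times\{1\}$, and any properly embedded annulus with that boundary configuration must cross the separating middle disk $D'\times\{0\}$. So ``pushing a bicollar of $C$ across $D$'' is not an ambient isotopy that kills the intersection; the operation that actually removes $C$ is \emph{surgery} (compression) of $S$ along $D$, which changes the topology of $S$ --- indeed it may lower the genus and may disconnect $S$ --- so your claim that the move ``preserves the genus'' begs precisely the point that needs proof. (Even in the case where $C$ is inessential in both surfaces, the disk swap is an isotopy only via irreducibility of $E$ and a ball-exchange argument, which you never invoke.)

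This is exactly where the paper's proof does real work: it performs an $h$-equivariant surgery on $S$ along $\Delta\cup h(\Delta)$, caps the four resulting boundary circles with parallel copies of $\Delta$ and $h(\Delta)$, takes the component $S'_b$ containing $\partial S$ (discarding possible closed components), checks that $S'_b$ is still an invariant Seifert surface for $(K,h,\delta)$ with $\Fix(h)\cap S'_b=\delta$, and then uses the inequality $g(S'_b)\le g(S)=g(K,h,\delta)$ together with the minimality of the equivariant genus to conclude that $S'_b$ is again a minimal genus Seifert surface for $(K,h,\delta)$, with $|S'_b\cap(F\cup h(F))|$ reduced by at least $2$. Your correct observations --- that $F\cap\Fix(h)=\emptyset$, that the intersection consists of loops paired as $\{C,h(C)\}$, that $\Delta\cap h(\Delta)=\emptyset$, and that incompressibility of $F\cup h(F)$ disposes of curves inessential in $S$ --- all appear in the paper's argument, but to repair your proof you must replace the isotopy move by this equivariant compression and supply the genus-accounting step; as written, the iteration has no valid mechanism for eliminating curves essential in $S$.
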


\begin{proof}
Since $F\cap h(F)=\emptyset$,
we can $h$-equivariantly isotope $S$ so that 
it intersects $F\cup h(F)$ transversely.
Then $S\cap(F\cup h(F))$ consists of simple loops,
because the boundaries of $S$ and $F\cup h(F)$ are disjoint.

Suppose first that $S\cap(F\cup h(F))$ contains a component
that is inessential in $F\cup h(F)$.
Let $\alpha$ be one such component which is innermost in $F\cup h(F)$,
and let $\Delta$ be the disk in $F\cup h(F)$ bounded by $\alpha$.
Then $\Delta\cap S=\partial\Delta$, $h(\Delta)\cap S=\partial (h(\Delta))$, and
$\Delta\cap h(\Delta)=\emptyset$.
Let $S'$ be the $h$-invariant surface obtained from $S$
by surgery along $\Delta\cup h(\Delta)$,
i.e., $S'$ is obtained from $S$ by removing 
an $h$-invariant open regular neighborhood of 
$\partial(\Delta\cup h(\Delta))$
in $S$
and capping off the resulting four boundary circles
with nearby parallel $h$-equivariant copies of $\Delta$ and $h(\Delta)$.
Let $S'_b$ be the component of $S'$ containing $\partial S$.
Then $S'_b$ is a Seifert surface for $(K,h,\delta)$
such that $g(S'_b)\le g(S)=g(K,h,\delta)$. 
Hence $S'_b$ is also a minimal genus Seifert surface for $(K,h,\delta)$. 
Moreover $|S'_b\cap(F\cup h(F))|\le |S\cap(F\cup h(F))|-2$,
where $|\cdot|$ denotes the number of the connected components.

Suppose next that $S\cap(F\cup h(F))$ contains a component
that is inessential in $S$.
Let $\alpha$ be one such component which is innermost in $S$.
Then $\alpha$ is also inessential in $F\cup h(F)$
by the incompressibility of  $F\cup h(F)$.
So, by the argument in the preceding paragraph,
we obtain a minimal genus Seifert surface $S'_b$ 
for $(K,h,\delta)$ such that
$|S'_b\cap(F\cup h(F))|\le |S\cap(F\cup h(F))|-2$.

By repeating the above arguments,
we can find a desired  minimal genus Seifert surface for $(K,h,\delta)$.
\end{proof}

Let $p:\tilde E\to E$ be the infinite cyclic covering,
and let $E_j$ ($j\in\ZZ$) be the closed-up components of 
$\tilde E\setminus p^{-1}(F\cup h(F))$
satisfying the following conditions.
\begin{enumerate}
\item
$\tilde E=\cup_{j\in\ZZ} E_j$.
\item
$E_j$ projects homeomorphically onto $E_{\delta}$ or $E_{\delta^c}$
according to whether $j$ is even or odd.
\item
$F_j:=E_{j-1}\cap E_j$ 
projects homeomorphically onto $F$ or $h(F)$
according to whether $j$ is even or odd. 
\end{enumerate}
For each $j\in\ZZ$,
let $h_j$ be the involution on $\tilde E$ 
which is a lift of $h$ such that $h_j(E_j)=E_j$.
Note that $h_j(E_i)=E_{2j-i}$ and that $\Fix(h_j)$ is a properly embedded arc in $E_j$
which projects to $\delta$ or $\delta^c$ according to whether $j$ is even or odd.
The composition 
$\tau:=h_{j+1}h_j$ is independent of $j$, and
gives a generator of the covering transformation group of $\tilde E$,
such that $\tau(E_i)=E_{i+2}$ and $\tau(F_i)=F_{i+2}$ for every $i$.

Note that $p^{-1}(S)=\sqcup_{i\in\ZZ}S_{2i}$,
where $S_{2i}$ is the lift of $S$ preserved by $h_{2i}$.
Let $r:=\max\{j\in\ZZ \ | \ S_0\cap E_j\ne \emptyset\}$, and 
assume that the number $r$
is minimized among all minimal genus Seifert surfaces $S$ for $(K,h,\delta)$
that satisfy the conclusion of Claim \ref{claim:essential-intersection0}.
Then Theorem \ref{thm:disjoint-Seifert} is equivalent to the assertion that $r=0$.

Suppose to the contrary that $r>0$. 
Let $S_0^-$ be the closed-up component of $\tilde E\setminus S_0$
containing $S_{-2}=\tau^{-1}(S_0)$.
Set $\tilde W:=S_0^- \cap E_r$ and $W:=p(\tilde W)$.
Since $\tilde W\subset E_r$,
$p|_{\tilde W}:\tilde W\to W$ is a homeomorphism.
(See Figure \ref{fig:F7}.)

\figer{f7}{0.95}{fig:F7}{Schematic picture of $p:\tilde E\to E$, where $r=2$.
The picture does not reflect the assumption that
the boundaries of $F$, $h(F)$ and $S$ are disjoint.}

\begin{claim}
\label{claim:W}
$W$ is a (possibly disconnected) compact $3$-manifold contained in $\interior E$
such that 
$\partial W=A \cup B$,
where 
$A=S\cap W$, 
$B=p(F_r) \cap W$,
and $A \cap B= p(S_0\cap F_r)\subset S\cap p(F_r)$.
Here $p(F_r)=F$ or $h(F)$ according to whether $r$ is even or odd.
\end{claim}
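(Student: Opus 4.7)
The plan is to verify each assertion at the level of the lift $\tilde W=S_0^-\cap E_r$ and then transport the conclusions through $p|_{E_r}$. Since $E_r$ is a single lift of the connected component $E_{\delta}$ or $E_{\delta^c}$, the projection $p|_{E_r}$ is a homeomorphism onto its image, so $p|_{\tilde W}:\tilde W\to W$ is a homeomorphism; compactness of $E_r$ immediately gives compactness of $\tilde W$ and hence of $W$. To see $W\subset\interior E$, I would examine the boundary cylinder $\partial \tilde E\cong \mathbb{R}\times S^1$: the circles $\partial F_j$ sit at strictly increasing levels, and by hypothesis $\partial S_0$ lies strictly between $\partial F_0$ and $\partial F_1$; hence $S_0^-\cap\partial\tilde E$ is contained below the level of $\partial F_1$, while $\partial\tilde E\cap E_r$ (with $r\ge 1$) lies at levels at or above $\partial F_r$, so $\tilde W\cap\partial\tilde E=\emptyset$.

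For the 3-manifold structure and boundary decomposition, by Claim \ref{claim:essential-intersection0} the surface $S_0$ meets $F_r$ transversely, and the maximality of $r$ forces $S_0\cap F_{r+1}\subset S_0\cap E_{r+1}=\emptyset$. Thus $\tilde W$ is a transverse intersection of two codimension-$0$ submanifolds-with-boundary of $\tilde E$, and its boundary (after smoothing corners along $S_0\cap F_r$) is $\partial\tilde W=(S_0\cap E_r)\cup(F_r\cap S_0^-)$. Here one also uses that $F_{r+1}$, being connected, disjoint from $S_0$, and having boundary above $\partial S_0$ on $\partial\tilde E$, must lie entirely in $S_0^+$ and so contributes nothing. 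Set $A:=p(S_0\cap E_r)$ and $B:=p(F_r\cap S_0^-)$; then $\partial W=A\cup B$ by the homeomorphism $p|_{\tilde W}$.

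The bulk of the work is to identify $A$ with $S\cap W$ and $B$ with $p(F_r)\cap W$. Writing $p^{-1}(S)=\bigsqcup_i S_{2i}$ with $S_{2i}=\tau^i(S_0)$, the $h_0$-symmetry $h_0(E_j)=E_{-j}$ and the maximality of $r$ show $S_0$ meets $E_j$ only for $-r\le j\le r$, so $S_{2i}$ meets $E_r$ only for $0\le i\le r$. For $i>0$, the boundary circle $\partial S_{2i}$ lies above $\partial S_0$ on $\partial\tilde E$; together with connectedness of $S_{2i}$ and $S_{2i}\cap S_0=\emptyset$, this forces $S_{2i}\subset S_0^+$, hence $S_{2i}\cap\tilde W=\emptyset$. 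Thus $p^{-1}(S)\cap\tilde W=S_0\cap E_r$, giving $A=S\cap W$. For $B$: by the parity convention, $F_r$ and $F_{r+1}$ project to the two distinct surfaces in $\{F,h(F)\}$, so $p^{-1}(p(F_r))\cap E_r=F_r$, yielding $B=p(F_r)\cap W$. Finally, $A\cap B=p((S_0\cap E_r)\cap(F_r\cap S_0^-))=p(S_0\cap F_r)$, which is clearly contained in $S\cap p(F_r)$.

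The step I expect to be the main obstacle is verifying $S_{2i}\subset S_0^+$ for $i>0$. This requires the fact that $S_0$ separates $\tilde E$ (a standard feature of Seifert surface lifts in the infinite cyclic cover) together with a careful tracking of where the boundary circle $\partial S_{2i}$ sits on the boundary cylinder $\partial\tilde E$ relative to $\partial S_0$. Once this point is settled, the remaining identifications reduce to bookkeeping.
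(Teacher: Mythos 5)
Your proof is correct and follows essentially the same route as the paper's: you analyze $\tilde W=S_0^-\cap E_r$ in the infinite cyclic cover, rule out contact with $\partial\tilde E$ by comparing the levels of $\partial S_0$ and $\partial F_j$ on the boundary cylinder (using $r>0$), invoke the transversality from Claim \ref{claim:essential-intersection0} together with $S_0^-\cap F_{r+1}=\emptyset$, and transport the conclusions through the homeomorphism $p|_{\tilde W}$. You even make explicit a point the paper leaves implicit, namely that no other lift $S_{2i}$ ($i\ne 0$) of $S$ and no lift of $p(F_r)$ other than $F_r$ can meet $\tilde W$, so that $p(S_0\cap\tilde W)$ is all of $S\cap W$ and $p(F_r\cap\tilde W)$ is all of $p(F_r)\cap W$.
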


\begin{proof}
Note that $S_0^-\cap \partial \tilde E$ is the half-infinite annulus in $\partial \tilde E$
which forms the closed-up component of $\partial \tilde E \setminus \partial S_0$
disjoint from $\partial F_1$.
On the other hand, $E_r\cap \partial \tilde E$ is the annulus in $\partial \tilde E$
bounded by $\partial F_r$ and $\partial F_{r+1}$.
Since $r>0$ by the assumption, these imply that 
$S_0^-\cap \partial \tilde E$ and $E_r\cap \partial \tilde E$ are disjoint.
Hence $\tilde W$ is disjoint from $\partial \tilde E$ and
so $\tilde W \subset \interior\tilde E$.
Note that $\fr S_0^-=S_0$ and $\fr E_r = F_r \sqcup F_{r+1}$
intersect transversely (Claim \ref{claim:essential-intersection0}),  where $S_0^- \cap F_{r+1}=\emptyset$. 
(Here $\fr Y$ with $Y=S_0^-, E_r$ denotes the frontier\index{frontier} 
of $Y$ in $\tilde E$,
namely the closure of $Y$ in $\tilde E$ minus the interior of $Y$
in $\tilde E$.) 
Hence $\tilde W$ is a compact $3$-manifold contained in $\interior \tilde E$,
such that 
$\partial \tilde W=\tilde A \cup \tilde B$,
where $\tilde A=(\fr S_0^-) \cap \tilde W=S_0\cap\tilde W$,
$\tilde B=(\fr E_r) \cap \tilde W=F_r \cap \tilde W$, and 
$\tilde A\cap \tilde B
= (S_0\cap\tilde W) \cap  (F_r \cap \tilde W)= S_0\cap F_r$.
Since $p|_{\tilde W}:\tilde W\to W$ is a homeomorphism,
these imply the claim.
\end{proof}

\begin{claim}
\label{claim:Wh}
$W\cap h(W)=\emptyset$.
\end{claim}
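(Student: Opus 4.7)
The plan is to translate everything into the infinite cyclic cover $\tilde E$ and exploit the two decompositions of $\tilde E$: one into the pieces $\{E_j\}$ by the lifts of $F\cup h(F)$, the other into the two closed-up half-spaces $S_0^\pm$ determined by the single lift $S_0$ of the chosen invariant Seifert surface $S$. Since $W=p(\tilde W)$ and $h(W)=p(h_0(\tilde W))$, the claim is equivalent to $\tau^k(\tilde W)\cap h_0(\tilde W)=\emptyset$ for every $k\in\ZZ$.

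The first step is to identify $h_0(\tilde W)$. From $h_j(E_i)=E_{2j-i}$ one reads off $h_0(E_r)=E_{-r}$. For the other factor, the relation $\tau=h_1h_0$ gives $h_0\tau h_0=\tau^{-1}$, hence $h_0(S_{2i})=\tau^{-i}(S_0)=S_{-2i}$; in particular $h_0(S_{-2})=S_2$, so $h_0$ swaps the two closed-up components of $\tilde E\setminus S_0$. Writing $S_0^+$ for the component containing $S_2$, this yields $h_0(\tilde W)=S_0^+\cap E_{-r}$. (Geometrically, $h_0$ acts near a fixed arc in $S_0$ as a half-rotation about that arc, reversing the normal direction to $S_0$.)

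Using the $\{E_j\}$-decomposition next, one has $\tau^k(\tilde W)\subset E_{r+2k}$, and since $E_i\cap E_j=\emptyset$ whenever $|i-j|\ge 2$ and $2|r+k|$ is a non-negative even integer, the only $k$ for which $E_{r+2k}\cap E_{-r}$ can be non-empty is $k=-r$. It thus remains to show $\tau^{-r}(\tilde W)\cap h_0(\tilde W)=\emptyset$. Setting $S_{-2r}^-:=\tau^{-r}(S_0^-)$, the closed-up component of $\tilde E\setminus S_{-2r}$ containing $\tau^{-r}(S_{-2})=S_{-2r-2}$, we have $\tau^{-r}(\tilde W)=S_{-2r}^-\cap E_{-r}$. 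For $r\ge 1$, the surface $S_{-2r}$ lies strictly in the interior of $S_0^-$ and $S_{-2r}^-$ is the side of $S_{-2r}$ away from $S_0$, so $S_{-2r}^-\subset\interior S_0^-$, which is disjoint from $S_0^+$. Hence $\tau^{-r}(\tilde W)\cap h_0(\tilde W)\subset S_{-2r}^-\cap S_0^+=\emptyset$, as required.

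The main obstacle is simply keeping track of the interplay between the two decompositions of $\tilde E$ and of how the involution $h_0$ and the deck transformation $\tau$ act on each; once the identity $h_0\tau h_0=\tau^{-1}$ is in hand, the rest reduces to index-arithmetic on $\ZZ$.
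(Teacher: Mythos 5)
Your proof is correct and follows essentially the same route as the paper: since $h_r=\tau^r h_0$, your key case $\tau^{-r}(\tilde W)\cap h_0(\tilde W)=\emptyset$ is exactly the paper's computation $\tilde W\cap h_r(\tilde W)=\emptyset$, both resting on the observation that $S_0^-$ is disjoint from its image under the relevant lifted involution (the side of $S_{2r}$, resp.\ of $S_0$ after translating by $\tau^{-r}$, containing the next lift up) because $r>0$. The only cosmetic difference is that you rule out the remaining deck translates $\tau^k(\tilde W)$ by index arithmetic on the pieces $E_j$, whereas the paper instead notes that $W\cup h(W)\subset p(E_r)$ and that $p|_{E_r}$ is a homeomorphism conjugating $h$ to $h_r$.
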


\begin{proof}
Since the restriction $p|_{E_r}$ is a homeomorphism
onto its image $p(E_r)$
(which is equal to $E_{\delta}$ or $E_{\delta^c}$ according to whether $r$ is even or odd)
and since the involution $h|_{p(E_r)}$ is pulled back to
the involution $h_r|_{E_r}$ by $p|_{E_r}$,
it follows that $p|_{E_r}$ 
restricts to a homeomorphism from $\tilde W \cap h_r(\tilde W)$
onto $W\cap h(W)$.
On the other hand, we have
\[
\tilde W \cap h_r(\tilde W) 
=
(S_0^- \cap E_r) \cap h_r(S_0^- \cap E_r)
=
(S_0^-\cap h_r(S_0^-))\cap E_r 
=
\emptyset \cap E_r=\emptyset,
\]
where the third identity is verified as follows.
Recall that $S_0^-$ is the closed-up component of $\tilde E\setminus S_0$
containing $S_{-2}$.
Thus $h_r(S_0^-)$ is the closed-up component of 
$\tilde E\setminus h_r(S_0)=\tilde E\setminus S_{2r}$
containing $h_r(S_{-2})=S_{2r+2}$.
Since $r>0$, this implies $S_0^-\cap h_r(S_0^-)=\emptyset$.
Hence $W\cap h(W)= p(\tilde W \cap h_r(\tilde W)) =\emptyset$ as desired.
\end{proof}

We perform an $h$-equivariant cut and paste operation on $S\cup F\cup h(F)$ along $W\cup h(W)$,
and produce surfaces $S'=S'_b\sqcup S'_c$, $F'=F'_b\sqcup F'_c$ and 
$h(F')=h(F'_b)\sqcup h(F'_c)$ as follows.
\begin{enumerate}
\item
$S':=(S\setminus (A\cup h(A))\cup(B\cup h(B))$,
$S'_b$ is the component of $S'$
containing $\partial S$, and $S'_c:=S'\setminus S'_b$.
\item
$F':= (F\setminus B) \cup A$,
$F'_b$ is the component of $F'$ containing $\partial F$,
and $F'_c:=F'\setminus F'_b$.
\end{enumerate}
Then $S'_b$ is a Seifert surface for $(K,h,\delta)$, and 
both $F'_b$ and $h(F'_b)$ are Seifert surfaces for $K$.
Let $\Sigma$ be the disjoint union of copies of 
$S'_c$, $F'_c$ and $h(F'_c)$.
Then $\Sigma$ is a possibly empty, closed, orientable surface,
such that $\chi(\Sigma)=\chi(S'_c\cup F'_c\cup h(F'_c))$.
(Note that the intersection among $S'_c$, $F'_c$ and $h(F'_c)$
consists of disjoint loops.)
Since $S$ and $F\cup h(F)$
intersects in essential loops by Claim \ref{claim:essential-intersection0},
none of the components of $\Sigma$ is a $2$-sphere
and therefore $\chi(\Sigma)\le 0$.
Hence
\begin{align*}
\chi(S'_b)+\chi(F'_b)+\chi(h(F'_b))
& \ge  \chi(S'_b)+\chi(F'_b)+\chi(h(F'_b))+\chi(\Sigma)\\
& = \chi(S) +\chi(F)+\chi(h(F)).
\end{align*}
Since $g(F)=g(K)\le g(F'_b)$, this implies
\[
\chi(S'_b) \ge
\chi(S)+(\chi(F)-\chi(F'_b))+(\chi(h(F))-\chi(h(F'_b)))
\ge \chi(S).
\]
Hence $g(S'_b) \le g(S)=g(K,h,\delta)$, and therefore
$S'_b$ is also a minimal genus Seifert surface for $(K,h,\delta)$.

Note that the $h_0$-invariant lift of $S'_b$ is
obtained from $S_0 \subset \tilde E$ by cut and paste operation along 
$\tilde W \cup h_0(\tilde W)$ 
and so it is
contained in the region of $\tilde E$
bounded by $F_{1-r}$ and $F_r$.
After a small $h$-equivariant isotopy of $S'_b$,
the lift is contained in the interior of that region,
and hence the number $r$ for $S'_b$ is strictly smaller than the original $r$.
This contradicts the minimality of $r$.
Hence we have $r=0$ as desired.
\end{proof}

The following example illustlates
Theorems \ref{thm:disjoint-Seifert}, \ref{thm:disjoint-Seifert2}.
and Corollary \ref{cor:disjoint-Seifert}.

\figer{f8}{0.9}{fig:F8}{The invariant Seifert surface $S$ and its compressing loop $\alpha$
on the positive side}

\begin{example}
\label{example:compression}
{\rm
Let $S$ be the invariant Seifert surface for $K=8_3$ 
constructed in Example \ref{example:8-3}.
Then the loop $\alpha$ in Figure \ref{fig:F8} is a compressing loop of $S$
on the positive side,
and $h(\alpha)$ is a compressing loop of $S$
on the negative side.
Note that $\alpha$ and $h(\alpha)$ intersect nontrivially,
and therefore $\{\alpha, h(\alpha)\}$ does not yield 
an $h$-equivariant compression of $S$.
Consider a parallel copy $S_+$ of $S$ on the positive side, 
and let $F_+$ be the minimal genus Seifert surface
obtained by compressing $S_+$ along a parallel copy of $\alpha$ on $S_+$.
Then $F_-:=h(F_+)$ is obtained from the parallel copy $h(S_+)$ of $S$ on the negative side
through compression along a parallel copy of $h(\alpha)\subset h(S_+)$.
The minimal genus Seifert surfaces $F_+$ and $F_-=h(F_+)$ are disjoint, and $S$ lies in 
a region between them.

As noted in Example \ref{example:8-3}, $K=8_3$ has precisely two 
minimal genus Seifert surfaces up to equivalence,
and the Kakimizu complex $MS(K)$ consists of a single edge and two vertices.
The automorphism $h_*$ of $MS(K)$ induced by $h$ is the reflection in the center
of the $1$-simplex. 
}
\end{example}

\medskip

Finally,
we prove Proposition \ref{prop:fibered}
for fibered knots.

\begin{proof}[Proof of Proposition \ref{prop:fibered}]
Let $p:E(K) \to S^1$ be the fibering whose fibers are 
minimal genus Seifert surfaces for $K$.
By the result of Tollefson \cite[Theorem 2]{Tollefson},
we may assume that the involution $h$ on $E(K)$ preserves the fibering
and that the involution $\check h$ on $S^1$ induced from $h$
is given by $\check h(z)=\bar z$,
where we identify $S^1$ with the unit circle on the complex plane. 
Then the inverse image $p^{-1}(\pm 1)$ gives a pair of
$h$-invariant Seifert surfaces for $K$ with genus $g(K)$.
Since these give Seifert surfaces
for the two marked strongly invertible knots associated with $(K,h)$,
we have $g(K,h,\delta)=g(K)$ as desired.
\end{proof}

\section{Equivariant $4$-genus}\label{sec:fourgenus}
In this section, 
we review old and new studies of
equivariant $4$-genera of symmetric knots.
For a periodic knot or a strongly invertible knot $K$,
one can define the {\it equivariant $4$-genus}\index{genus!equivariant $4$-genus} 
$\tilde g_4(K)$
to be the minimum of the genera of smooth surfaces 
in $B^4$ bounded by $K$ that are invariant by a periodic diffeomorphism of $B^4$
extending the periodic diffeomorphism realizing the symmetry of $K$.
(Here, the symbol expressing the symmetry is suppressed in the symbol $\tilde g_4(K)$.)
Of course, $\tilde g_4(K)$ is bounded below by the 
$4$-genus $g_4(K)$\index{genus!$4$-genus}, 
and it is invariant by equivariant cobordism.

The equivarinat cobordism of periodic knots was 
studied by Naik \cite{Naik1997},
where she gave criteria
for a given periodic knot to be equivariantly slice,
in terms of the linking number and the homology
of the double branched covering.
By using the criteria, she presented examples of slice periodic knots which are not equivariantly slice.
(See Davis-Naik \cite{Davis-Naik} and Cha-Ko \cite{Cha-Ko}
for further development.)

The equivariant cobordism\index{equivarinat cobordism}\index{cobordism!equivarinat cobordism} 
of strongly invertible knots was studied by 
the third author \cite{Sakuma1986}, where
the notion of directed strongly invertible knots was introduced
so that the connected sum is well-defined, and it was observed that
the set $\tilde\cob$ of the directed strongly invertible knots
modulo equivariant cobordism form 
a group 
with respect to the connected sum. 
He then introduced a polynomial invariant $\eta:\tilde\cob\to \ZZ\langle t\rangle$
which is a group homomorphism,
and presented a slice strongly invertible knot that is not equivariantly slice.
We note that it was recently proved by 
Prisa \cite{Prisa} that the group $\tilde\cob$ is not abelian.

These two old works show that there is a gap between the (usual) $4$-genus $g_4(K)$
and the equivariant $4$-genus $\tilde g_4(K)$
for both periodic knots and strongly invertible knots.

In a recent series of works 
\cite{Boyle-Issa2021a, Boyle-Issa2021b, Alfieri-Boyle, Boyle-Musyt,
Boyle-Chen2022a, Boyle-Chen2022b}, 
Boyle and his coworkers 
launched a project to systematically study the
equivariant $4$-genera of symmetric knots.
It should be noted that they treat not only periodic/strongly-invertible knots,
but also freely periodic knots and strongly negative amphicheiral knots.
In \cite[Theorems 2 and 3]{Boyle-Issa2021a},
Boyle and Issa gave a lower bound of $\tilde g_4(K)$ 
of a periodic knot $K$
in terms of the signatures
of $K$ and its quotient knot,
and showed that the gap between $g_4(K)$ and $\tilde g_4(K)$ can be arbitrary large.
They also introduced the notion of the butterfly $4$-genus $\widetilde{bg}_4(K)$
of a directed (or marked) strongly invertible knot $K$,
gave an estimate of $\widetilde{bg}_4(K)$ in terms of the $g$-signature,
and showed that the gap between $g_4(K)$ and $\widetilde{bg}_4(K)$ can be arbitrary large
\cite[Theorem 4]{Boyle-Issa2021a}.

In \cite{Dai-Mallick-Stoffregen},
Dai, Mallick and Stoffregen
introduced equivariant concordance invariants of strongly invertible knots
using knot Floer homology,
and showed that 
the gap between the equivariant $4$-genus $\tilde g_4(K)$ and the genus $g(K)$
for marked strongly invertible knots can be arbitrary large,
proving the conjecture \cite[Question 1.1]{Boyle-Issa2021a} by Boyle and Issa.
In fact, they constructed a homomorphism from $\tilde\cob$
to a certain group $\mathfrak{K}$, which is not a priori abelian.
By using the invariant, 
they proved that, for
the knot $K_n$ defined as the connected sum
$
K_n:=(T_{2n,2n+1}\# T_{2n,2n+1})\# -(T_{2n,2n+1}\# T_{2n,2n+1})
$,
the equivariant $4$-genus $\tilde g_4(K_n)$, 
with respect to some strong inversion,
is at least $2n-2$ \cite[Theorem 1.4]{Dai-Mallick-Stoffregen},
whereas $g_4(K_n)=0$.
Here $T_{2n,2n+1}$ is the torus knot of type $(2n,2n+1)$,
and \lq\lq$-$'' denoted the reversed mirror image.

On the other hand,
since $K_n$ is fibered,
the ($3$-dimensional) equivariant genera of 
the marked strongly invertible knots associated with $K_n$ 
are equal to the genus $g(K_n)=4n(2n-1)$
(see Proposition \ref{prop:fibered}),
which is bigger than the estimate $2n-2$ of $\tilde g_4(K_n)$ from below. 
As far as the authors know, there is no known algebraic invariant
that gives an effective estimate of the ($3$-dimensional) equivariant genera
of marked strongly invertible knots,
though the ($3$-dimensional) genus of any knot is estimated by the the Alexander polynomial 
and moreover it is determined by the Heegaard Floer homology 
(Ozsvath-Szabo \cite[Theorem 1.2]{Ozsvath-Szabo}).
As is noted in \cite{Dai-Mallick-Stoffregen},
there has been a renewed interest in strongly invertible knots
from the view point of more modern invariants
(see Watson \cite{Watson} and Lobb-Watson \cite{Lobb-Watson}).
Thus we would like to pose the following question.

\begin{question}
\label{question1}
{\rm
Is there a computable algebraic invariant of a marked strongly invertible knot 
that gives an effective estimate of the equivariant genus, 
or more strongly, determines it? 
In other words, is there a computable,
algebraically defined,
integer-valued function
$I$:
$(K,h,\delta)\mapsto I(K,h,\delta)$
on the set of all marked strongly invertible knots up to equaivalence,
such that $g(K,h,\delta) \ge I(K,h,\delta)$
and that it does not descend to a function on $\tilde\cob$?
}
\end{question}

The last requirement means that we have a chance to have a nontrivial estimate of
$g(K,h,\delta)$ by using $I$ even if the equivariant $4$-genus 
is strictly smaller than the equivariant genus.
If we drop this requirement, then 
such invariants are constructed 
in the above mentioned work by 
Dai, Mallick and Stoffregen \cite{Dai-Mallick-Stoffregen}.
If we restrict to the marked strongly invertible knots $(K,h,\delta)$
such that 
the constituent knot $\check K=k\cup \check \delta$ of
the quotient $\theta$-curve $\theta(K,h)$ is alternating,
then 
the works by Kalfagianni and Lee \cite{Kalfagianni-Lee},
Ito and Takimura \cite{Ito-Takimura2020a, Ito-Takimura2020b}
and {Kindred} \cite{Kindred}
on crosscap numbers of alternating links
give such invariants.
In fact, their results together with the classical results
by Murasugi \cite{Murasugi} and Crowell \cite{Crowell}
on the genera of alternating links
estimate/determine the band number 
$b(\check K)=\min(2g(\check K),\gamma(\check K))$ 
of the alternating knot $\check K$
in terms of the Jones polynomial of $\check K$.
Since $g(K,h,\delta)\ge b(\check K)$ by Corollary \ref{cor:band-number},
the function $I$ defined by
$I(K,h,\delta):=b(\check K)$
on the set of 
marked strongly invertible knots $(K,h,\delta)$
with alternating $\check K$ 
satisfies the desired property.
We note that all of the above mentioned works 
\cite{Kalfagianni-Lee, Ito-Takimura2020a, Ito-Takimura2020b, Kindred}
depend on the geometric study due to 
Adams and Kindred \cite{Adams-Kindred}
which gives an algorithm for determining the cross cap numbers
of prime alternating links.
We also note Burton and Ozlen \cite{Burton-Ozlen} 
present an algorithm that utilizes normal surface theory and integer programming 
to determine the crosscap numbers of generic knots.
However, as far as we know, there are no computable algebraic invariants
which give lower bounds of crosscap numbers of generic knots.
Moreover, what we really need to estimate is 
the smallest Betti number of the spanning surfaces for $\check K$ 
that satisfy the conditions in Proposition \ref{prop:band-number}
(cf. Proposition \ref{prop:lifting}).

{\bf Acknowledgements}\ 
M.H. and M.S learned the theory of divides
through a lecture by A'Campo in Osaka in 1999,
and R.H. learned it 
through a lecture by A'Campo's former student
Masaharu Ishikawa in Hiroshima in 2015.
The theory of divides and discussions with A'Campo 
have been sources of inspiration for the authors.
The authors would like to thank Professor Norbert A'Campo
for invaluable discussions and encouragements.
They would also like to thank the anonymous referee
for his or her valuable comments and suggestions 
that helped them to improve the exposition. 

M.H. is supported by JSPS KAKENHI JP18K03296.
M.S. is supported by JSPS KAKENHI JP20K03614
and by Osaka Central Advanced Mathematical Institute 
(MEXT Joint Usage/Research Center on Mathematics and Theoretical Physics JPMXP0619217849).

%
%
%
%
%


\begin{thebibliography}{99}

\bibitem{A'Campo}
N.\,A'Campo, 
{\em Generic immersions of curves, knots, monodromy and Gordian number}, 
Inst. Hautes \'Etudes Sci. Publ. Math. No. {\bf 88} (1998), 151--169. 

\bibitem{Adams-Kindred}
C. Adams and T. A. Kindred, 
{\em A classification of spanning surfaces for alternating links},
Algebr. Geom. Topol. {\bf 13} (2013), no. 5, 2967--3007. 

\bibitem{ALSS}
S.\,Aimi, D.\,Lee, S.\,Sakai and M.\,Sakuma, 
{\em Classification of parabolic generating pairs of Kleinian groups with two parabolic generators},
Rend. Istit. Mat. Univ. Trieste {\bf 52} (2020), 477--511. 


\bibitem{Alfieri-Boyle}
A.\,Alfieri and K.\,Boyle,
{\em Strongly invertible knots, invariant surfaces, and the Atiyah-Singer signature theorem},
arXiv:2109.09915.

\bibitem{Barbensi-Buck-Harrington-Lackenby}
A.\,Barbensi, D.\,Buck, H. A.\,Harrington and M.\,Lackenby,
{\em Double branched covers of knotoids},
arXiv:1811.09121.

\bibitem{Bessho}
K.\,Bessho, 
{\em Incompressible surfaces bounded by links}, Master Thesis, Osaka University, 1994 (in Japanese).

\bibitem{Boileau}
M.\,Boileau, 
{\em Noeuds rigidement inversibles},
Low-dimensional topology (Chelwood Gate, 1982), 1--18.

\bibitem{BLP}
M.\,Boileau, B.\,Leeb and J.\,Porti, 
{\em Geometrization of 3-dimensional orbifolds}, 
Ann. of Math. {\bf 162} (2005), 195--290. 

\bibitem{BoP} 
M.\,Boileau and J.\,Porti,
{\em Geometrization of 3-orbifolds of cyclic type},
Ast\'erisque Monograph, {\bf 272}, 2001.

\bibitem{Boyle-Chen2022a}
K.\,Boyle and W.\,Chen,
{\em Negative amphichiral knots and the half-Conway polynomial},
arXiv:2206.03598.

\bibitem{Boyle-Chen2022b}
\bysame,
{\em Equivariant topological slice disks and negative amphichiral knots},
arXiv:2207.12593

\bibitem{Boyle-Issa2021a}
K.\,Boyle and A.\,Issa,
{\em Equivariant 4-genera of strongly invertible and periodic knots},
arXiv:2101.05413.

\bibitem{Boyle-Issa2021b}
\bysame,
{\em Equivariantly slicing strongly negative amphichiral knots},
arXiv:2109.01198.

\bibitem{Boyle-Musyt}
K.\,Boyle and J.\,Musyt,
{\em Equivariant cobordisms between freely-periodic knot},
arXiv:2111.10678.

\bibitem{Burton-Ozlen}
B.\,A.\,Burton, and M.\,Ozlen, 
{\em Computing the crosscap number of a knot using integer programming 
and normal surfaces},
ACM Trans. Math. Software {\bf 39} (2012), no. 1, Art. 4, 18 pp. 

\bibitem{Cha-Ko}
J.\,C.\,Cha and K.\,H.\,Ko, 
{\em On equivariant slice knots},
Proc. Amer. Math. Soc. {\bf 127} (1999), no. 7, 2175--2182. 

\bibitem{Clark}
B.\,E.\,Clark, 
{\em Crosscaps and knots},
Internat. J. Math. Math. Sci. {\bf 1} (1978), no. 1, 113--123. 

\bibitem{CHK} 
D.\,Cooper, C.\,Hodgson and S.\,Kerckhoff,
{\em Three-dimensional orbifolds and cone-manifolds},
MSJ Memoirs {\bf 5}, 2000.

\bibitem{Couture}
O.\,Couture, 
{\em Strongly invertible links and divides}, 
Topology {\bf 47} (2008), no. 5, 316--350. 


\bibitem{Crowell}
R.\,Crowell, 
{\em Genus of alternating link types}, 
Ann. of Math. (2) {\bf 69} (1959), 258--275.

\bibitem{Dai-Mallick-Stoffregen}
I.\,Dai, A.\,Mallick and M.\,Stoffregen,
{\em Equivariant knots and knot Floer homology},
arXiv:2201.01875.

\bibitem{Davis-Naik}
J.\,F.\,Davis and S.\,Naik, 
{\em Alexander polynomials of equivariant slice and ribbon knots in $S^3$},
Trans. Amer. Math. Soc. {\bf 358} (2006), no. 7, 2949--2964.

\bibitem{DL}
J.\,Dinkelbach and B.\,Leeb, 
{\em Equivariant Ricci flow with surgery and applications to finite group actions on geometric 3-manifolds},
Geom. Topol. {\bf 13} (2009), 1129--1173. 

\bibitem{Edmonds}
A.\,L.\,Edmonds, 
{\em Least area Seifert surfaces and periodic knots},
Topology Appl. {\bf 18} (1984), no. 2-3, 109--113. 

\bibitem{Edmonds-Livingston}
A.\,L.\,Edmonds and C.\,Livingston,
{\em Group actions on fibered three-manifolds}. 
Comment. Math. Helv. {\bf 58} (1983), no. 4, 529--542. 

\bibitem{Freedman-Hass-Scott}
M.\,Freedman, J.\,Hass and P.\,Scott,
{\em Least area incompressible surfaces in 3-manifolds},
Invent. Math. {\bf 71} (1983), no. 3, 609--642.

\bibitem{Hatcher-Thurston}
A.\,Hatcher and W.\,Thurston,
{\em Incompressible surfaces in 2-bridge knot complements},
Invent. Math. {\bf 79} (1985), no. 2, 225--246. 

\bibitem{Hirasawa}
M.\,Hirasawa, 
{\em Visualization of A'Campo's fibered links and unknotting operation},
Proceedings of the First Joint Japan-Mexico Meeting in Topology (Morelia, 1999). 
Topology Appl. {\bf 121} (2002), no. 1-2, 287--304.

\bibitem{Hirasawa-Hiura-Sakuma2}
M.\,Hirasawa, R.\,Hiura and M.\,Sakuma,
{\em The equivariant genera of marked strongly invertible knots
associated with $2$-bridge knots},
preliminary draft.


\bibitem{Hirasawa-Teragaito}
M.\,Hirasawa and M.\,Teragaito, 
{\em Crosscap numbers of 2-bridge knots}, 
Topology {\bf 45} (2006), no. 3, 513--530. 

\bibitem{Hiura}
R.\,Hiura,
{\em Invariant Seifert surfaces for strongly invertile knots},
Master Thesis (in Japanese), Hiroshima University, 2017.

\bibitem{Ito-Takimura2020a}
N.\,Ito and Y.\,Takimura,
{\em Crosscap number of knots and volume bounds}, 
Internat. J. Math. {\bf 31} (2020), no. 13, 2050111, 33 pp. 

\bibitem{Ito-Takimura2020b}
N.\,Ito and Y.\,Takimura,
{\em A lower bound of crosscap numbers of alternating knots},
J. Knot Theory Ramifications {\bf 29} (2020), no. 1, 1950092, 15 pp.

\bibitem{Kakimizu1988}
O.\,Kakimizu, 
{\em Finding disjoint incompressible spanning surfaces for a link},
Hiroshima Math. J. {\bf 22} (1992), no. 2, 225--236.

\bibitem{Kakimizu2005}
\bysame, 
{\em Classification of the incompressible spanning surfaces for prime knots of 10 or less crossings}, Hiroshima Math. J. {\bf 35} (2005), no. 1, 47--92.

\bibitem{Kalfagianni-Lee}
E.\,Kalfagianni and C.\,R\,S.\, Lee, 
{\em Crosscap numbers and the Jones polynomial},
Adv. Math. {\bf 286} (2016), 308--337.

\bibitem{Kindred}
T.\,Kindred, 
{\em Crosscap numbers of alternating knots via unknotting splices},
Internat. J. Math. {\bf 31} (2020), no. 7, 2050057, 30 pp.

\bibitem{Kobayashi}
T.\,Kobayashi, 
{\em Uniqueness of minimal genus Seifert surfaces for links},
Topology Appl. {\bf 33} (1989), no. 3, 265--279. 

\bibitem{Kodama-Sakuma}
K.\,Kodama and M.\,Sakuma,
{\em Symmetry groups of prime knots up to $10$ crossings},
Knots 90 (Osaka, 1990), 323--340, de Gruyter, Berlin, 1992. 

\bibitem{Kojima}
S.\,Kojima,
{\em Finiteness of symmetries of $3$-manifolds},
preprint.


\bibitem{Lobb-Watson}
A.\,Lobb and L.\,Watson, 
{\em A refinement of Khovanov homology},
Geom. Topol. {\bf 25} (2021), no. 4, 1861--1917. 

\bibitem{Marumoto}
Y.\,Marumoto, 
{\em Relations between some conjectures in knot theory}, 
Math. Sem. Notes Kobe Univ. {\bf 5} (1977), no. 3, 377--388.

\bibitem{Murasugi}
K.\,Murasugi,
{\em On the genus of the alternating knot. I, II}, 
J. Math. Soc. Japan {\bf 10} (1958), 94--105, 235--248. 

\bibitem{Meeks-Scott}
W.\,H.\,Meeks and P.\,Scott,
{\em Finite group actions on 3-manifolds}, 
Invent. Math. {\bf 86} (1986), no. 2, 287--346. 

\bibitem{Meeks-Yau}
W.\,H.\,Meeks and S.\,T.\,Yau, 
{\em The classical Plateau problem and the topology of three-dimensional manifolds,
The embedding of the solution given by Douglas-Morrey and an analytic proof of Dehn's lemma}, Topology {\bf 21} (1982), no. 4, 409--442. 


\bibitem{Morgan-Bass}
J.\,W.\,Morgan and H.\,Bass, 
{\em The Smith conjecture},
Pure Appl. Math., {\bf 112}, Academic Press, Orlando, FL, 1984. 

\bibitem{Murakami-Yasuhara}
H. Murakami and A. Yasuhara,
{\em Crosscap number of a knot},
Pacific J. Math. {\bf 171} (1995), no. 1, 261--273. 

\bibitem{Murasugi1971}
K.\,Murasugi, 
{\em On periodic knots},
Comment. Math. Helv. {\bf 46} (1971), 162--174. 

\bibitem{Naik1997}
S.\,Naik, 
{\em Equivariant concordance of knots in $S^3$}, 
KNOTS '96 (Tokyo), 81--89, World Sci. Publ., River Edge, NJ, 1997. 

\bibitem{Ozsvath-Szabo}
P.\,Ozsv\'ath and Z.\,Szab\'o, 
{\em Holomorphic disks and knot invariants},
Adv. Math. {\bf 186} (2004), no. 1, 58--116. 

\bibitem{Prisa}
A. Di. Prisa,
{\em The equivariant concordance group is not abelian},
arXiv:2207.04985 [math.GT]


\bibitem{Przytycki-Schultens}
P.\,Przytycki and J.\,Schultens, 
{\em Contractibility of the Kakimizu complex and symmetric Seifert surfaces}, 
Trans. Amer. Math. Soc. {\bf 364} (2012), no. 3, 1489--1508.

\bibitem{Riley}
R.\,Riley, 
{\em An elliptical path from parabolic representations to hyperbolic structures},
Topology of low-dimensional manifolds (Proc. Second Sussex Conf., Chelwood Gate, 1977), pp. 99--133, Lecture Notes in Math., {\bf 722}, Springer, Berlin, 1979. 

\bibitem{Sakuma1986} 
M.\,Sakuma, 
{\em On strongly invertible knots},
Algebraic and topological theories (Kinosaki, 1984), 176--196, Kinokuniya, Tokyo, 1986.

\bibitem{Sakuma1986b} 
\bysame, 
{\em Uniqueness of symmetries of knots},
Math. Z. {\bf 192} (1986), no. 2, 225--242.

\bibitem{Sakuma1994} 
\bysame, 
{\em Minimal genus Seifert surfaces for special arborescent links},
Osaka J. Math. {\bf 31} (1994), 861--905.
 
\bibitem{Tollefson}
J.\,L.\,Tollefson, 
{\em Periodic homeomorphisms of 3-manifolds fibered over $S^1$}, 
Trans. Amer. Math. Soc. {\bf 223} (1976), 223--234. Erratum, ibid. {\bf 243} (1978), 309--310.

\bibitem{Trotter1961}
H.\,F.\,Trotter,
{\em Periodic automorphisms of groups and knots}, 
Duke Math. J. {\bf 28} (1961), 553--557.

\bibitem{Trotter1963}
\bysame, 
{\em Non-invertible knots exist},
Topology {\bf 2} (1963), 275--280. 

\bibitem{Turaev}
V.\,Turaev, 
{\em Knotoids}, 
Osaka J. Math. {\bf 49} (2012), no. 1, 195--223.

\bibitem{Watson}
L.\,Watson, 
{\em Khovanov homology and the symmetry group of a knot},
Adv. Math. {\bf 313} (2017), 915--946. 

\bibitem{Whitten}
W.\,Whitten, 
{\em Inverting double knots}, 
Pacific J. Math. {\bf 97} (1981), no. 1, 209--216. 

\end{thebibliography}
\end{document}